\theoremstyle{definition}
\newtheorem{ntn}{Notation}[section]
\theoremstyle{plain}
\newtheorem{cor}[ntn]{Corollary}
\newtheorem{lem}[ntn]{Lemma}
\newtheorem{prp}[ntn]{Proposition}
\newtheorem{thm}[ntn]{Theorem}
\theoremstyle{remark}
\newtheorem{rem}[ntn]{Remark}
\newtheorem{exa}[ntn]{Example}
\numberwithin{equation}{section}
\newcommand{\N}{\mathbb{N}}
\newcommand{\Z}{\mathbb{Z}}
\newcommand{\q}{\mathbb{Q}}
\newcommand{\R}{\mathbb{R}}
\newcommand{\A}{\mathcal{A}}
\newcommand{\E}{\mathcal{E}}
\newcommand{\mt}{\mapsto}
\newcommand{\arr}{\rightarrow}
\newcommand{\larr}{\longrightarrow}
\newcommand{\se}{\subseteq}
\newcommand{\two}{\twoheadrightarrow}
\newcommand{\tail}{\rightarrowtail}
\newcommand{\vb}{{\rm vb}}
\newcommand{\Hom}{{\rm Hom}}
\newcommand{\corr}{{\rm cor}}
\newcommand{\ress}{{\rm res}}
\newcommand{\id}{{\rm id}}
\newcommand{\tor}{{{\rm Tor}}}
\newcommand{\FP}{{\rm FP}}
\newcommand{\GL}{{\rm GL}}
\newcommand{\supdim}{{\sup_{m\geq 1}\dim_\q}}
\newtheoremstyle{athm}
  {}
  {}
  {\itshape}
  {}
  {\scshape}
  {}
  {.5em}
  {\thmnote{#3}}
\theoremstyle{athm}
\newtheorem*{athm}{}
\begin{document}

\title[Virtual rational Betti numbers]
{Virtual rational Betti numbers of nilpotent-by-abelian groups}
\author{Behrooz  Mirzaii}
\author{Fatemeh Y. Mokari}
\begin{abstract}
In this paper we study virtual rational Betti numbers of a nilpotent-by-abelian group $G$,
where the abelianization $N/N'$ of its nilpotent part $N$ satisfies certain tameness 
property. More precisely, we prove that if $N/N'$ is $2(c(n-1)-1)$-tame 
as a $G/N$-module, $c$ the nilpotency class of $N$, then 
\[
\vb_j(G):=\sup_{M\in\A_G}\dim_\q H_j(M,\q)
\]
is finite for all $0\leq j\leq n$, where $\A_G$ is the set of all finite index 
subgroups of $G$.
\end{abstract}
\maketitle

\section*{Introduction}

The virtual rational Betti numbers of a finitely generated group studies the growth 
of the Betti numbers of the group as one follows passage to subgroups of finite index.
Following \cite{bridson-kochloukova2013} and \cite{kochloukova-mokari2014}, we define
the $n$-th virtual rational Betti number of a finitely generated group $G$ as
\[
\vb_n(G):=\sup_{M\in \A_G}\dim_\q H_n(M,\q),
\]
where $\A_G$ is the set of all subgroups of finite index in $G$.

In \cite{bridson-kochloukova2013} Bridson and Kochloukova introduced and studied the first
virtual rational Betti number of a finitely generated group $G$ and showed that if $G$ 
is either a finitely presented nilpotent-by-abelian group or an abelian-by-polycyclic group 
of type $\FP_3$, then $\vb_1(G)$ is finite. Moreover, they conjectured that this should be
true for all finitely presented soluble groups. As they have shown the finiteness of the
first virtual rational Betti numbers of a metabelian group $G$, with normal abelian subgroup $A$ 
and abelian quotient $Q$ is closely related to the 2-tameness of $A$ as a
$Q$-module, an invariant of metabelian groups introduced by Bieri and Strebel 
\cite{bieri-strebel1980}. 

In \cite{kochloukova-mokari2014}, Kochloukova and the second author extended these results 
to higher virtual rational 
Betti numbers of abelian-by-polycyclic groups, by replacing higher tameness with 
finitely generatedness of high tensor powers of abelian normal subgroups.
More precisely, let $A$ be a normal abelian subgroup of $G$ such that the quotient 
group $Q:=G/A$ is polycyclic. If $Q$ is not abelian, we assume that $G$ is of type 
$\FP_3$. Then it is shown in \cite[Theorem~A]{kochloukova-mokari2014} 
that if $\bigotimes_\q^{2n}(A\otimes_\Z\q)$ is finitely generated as a
$\q Q$-module via the diagonal action, then $\vb_j(G)$ is finite
for $0 \leq j \leq n$. Note that if $G$ is metabelian, then finitely generatedness
of $\bigotimes_\q^{2n}(A\otimes_\Z\q)$ is equivalent to 2n-tameness of $A$ as a
$Q$-module (see Theorem~\ref{b-g}).

Finitely generated soluble groups occurring in applications are often 
nilpotent -by-abelian-by-finite, that is, any such group $G$ contains subgroups 
$N\unlhd H\unlhd G$ such that $N$ is nilpotent, $H/N$ abelian and $G/H$ finite. 
In this paper, we study the virtual rational Betti numbers of nilpotent-by-abelian-by-finite 
groups. Since $\vb_n(G)=\vb_n(H)$ (Lemma~\ref{5}), it is sufficient 
to study virtual rational Betti numbers of nilpotent-by-abelian groups. 
Here is our main theorem.

\begin{athm}[{\bf Tehorem \ref{d-f4}.}]
Let $N \tail G \two Q$ be an exact sequence of groups, where $G$ is finitely generated, 
$N$ is nilpotent of class $c$ and $Q$ is abelian. If $N/N'$ is $2(c(n-1)+1)$-tame, 
then for any $ 0 \leq j \leq n$, $\vb_j(G)$ is finite.
\end{athm}

As a motivation for the study of virtual rational Betti numbers,
one can mention a result of L\"uck which says that the $L_2$-Betti numbers can be computed as
a limit involving the ordinary Betti numbers of subgroups of finite index. 
Here we show that for these groups there is no growth, i.e. the sequences
remain bounded. This result therefore confirms L\"uck's formula by
establishing a stronger property for this class of groups \cite{luck1994}.

To prove our main theorem we needed to study certain aspects of homology of nilpotent groups.
Nilpotent groups have a great deal of commutativity built into their structure and
they are groups that are ``almost abelian". So it is natural to expect that some of 
the properties of homology of abelian groups, in some way, may be shared by nilpotent groups.
In this article, we will study two such properties.
For more similarity between homology of
abelian and nilpotent groups we refer the interested reader to 
\cite{dwyer1975}, \cite{robinson1976}, \cite{h-m-r1975}.

The $n$-th homology of an abelian group $A$ with rational coefficients is isomorphic to
$\bigwedge^{n}_\q (A\otimes_\Z \q)$. We prove the analogue of this result for nilpotent groups.
More precisely, if $N$ is a nilpotent group of class $c$, then we show that
there exists a natural filtration 
of $H_j(N,\q)$,
\[
0=E_0 \se E_1 \se \cdots \se E_{l-1} \se E_l= H_j(N,\q),
\]
such that for any $0\leq k \leq l,$ $E_k/E_{k-1}$ is a natural subquotient 
of a vector space from the set 
$\{\bigotimes^s_\q V\}_{0 \leq s \leq c(j-1)+1}$, where $V:=(N/N')\otimes_\Z \q$.
When our group is free nilpotent, we show that the above theorem is true even with integral 
coefficients. Although the existence of the above filtration is not a surprise and can be 
obtain by easy induction, but the bound $c(j-1)+1$ is new and important for our applications. 
Furthermore, for groups with small $c$ we show that this bound is sharp. The proofs of these
results occupy Sections \ref{section-diff} and \ref{section-nilp}.

Let $N$ be a nilpotent normal subgroup of a group $G$. If $G$ acts nilpotently
on $N/N'$, then Theorem \ref{nil-filtration} implies that $G$ acts nilpotently on 
$H_k(N, \q)$. But with a direct method we can prove a more general result. Let $T$ be an 
$RG$-module, where $R$ is a commutative ring. In Section~\ref{section-nil-action},
we will show that if $G$ acts nilpotently on both $N/N'$ and $T$, 
then $G$ acts nilpotently on each $H_k(N, T)$ and $H^k(N, T)$. 
As an application, 
we show that if moreover $G/N$ is finite and $l$-torsion and  $1/l \in R$, then the natural 
action of $G/N$ on $H_k(N, T)$ and $H^k(N, T)$ is trivial and therefore the natural maps
\[
\corr_N^G:H_k(N,T)\arr H_k(G, T), \ \ \ress_N^G:H^k(G,T)\arr H^k(N,T)
\]
are isomorphisms. 

Both of these results about the homology of nilpotent groups are used in the 
proof of our main theorem (Theorem \ref{d-f4}).

\medskip
{\bf Acknowledgments.} 
We would like to thank Prof. D. H. Kochloukova for introducing the problem to us and 
for her constructive suggestion during the preparation of this paper. Example \ref{koch}
was suggested by her. Moreover, the special case of Theorem \ref{nil-filtration} for
$c=2$ also was proved by J.~R.~Groves which was made available to us by D.~H.~Kochloukova.
His proves is different than ours. We would like to thank them for their helps and suggestions.
The second author is supported by Capes/CNPq PhD grant.

\section{Differentials of the lyndon-hochschild-serre spectral sequence}\label{section-diff}

Let $G$ be a group, $A$ an abelian normal subgroup of $G$ and $Q:=G/A$. Let 
\[
{}_M\E^2_{p,q}=H_p(Q ,H_q(A, M))\Rightarrow H_{p+q}(G,M)
\]
be the Lyndon-Hochschild-Serre spectral sequence associated
to the exact sequence of groups 
\[
A \tail G \two Q,
\]
where here $M$ is either $\Z$ or $\q$ with the trivial action of $G$. In this 
section, we would like to give an explicit formula for the differentials
\[
d^2_{2,q}:{}_\q \E_{2,q}^2\arr {}_\q \E_{0,q+1}^2, 
\]
for any $q\geq 0$, when $A$ is central, i.e. $A \se Z(G)$.

Let $\phi: A \otimes_\Z H_{q}(A, M) \arr H_{q+1}(A, M)$
be the natural product map \cite[Chap.~V, \S5]{brown1994}, say induced by
the shuffle product on the bar resolution, and consider the following composition 
\begin{align}\label{cup-prod}
H^2(Q, A) \otimes_\Z H_p(Q, H_q(A,M)) & 
\overset{- \cap -}{-\!\!\!-\!\!\!-\!\!\!-\!\!\!\larr} 
H_{p-2}(Q, A\otimes _\Z H_q(A,M))
\end{align}
\[
\hspace{6 cm}
\overset{H_{p-2}(\id_Q,\phi)}{-\!\!\!-\!\!\!-\!\!\!-\!\!\!-\!\!\!-\!\!\!-\!\!\!\larr}
H_{p-2}(Q, H_{q+1}(A,M)),
\]
where $- \cap -$ is the cap product \cite[Chap.~V, \S3]{brown1994}. 

Let $\rho$ be the element of $H^2(Q, A)$ associated to 
$A \tail G \two Q$ \cite[Chap.~IV, Theorem~3.12]{brown1994} and set
\[
\Delta(\rho):=H_{p-2}(\id_Q, \phi)\circ (\rho \cap -): 
H_p(Q, H_q(A, M)) \arr H_{p-2}(Q, H_{q+1}(A, M)).
\]
Andr\'e has proved the following fact.

\begin{prp}\label{andre}
Let an exact sequence $A \tail G \two Q$ be given as in above. Then
\[
d_{p,q}^2=d'^2_{p,q} + \Delta(\rho),
\]
where $d'^2_{p,q}$ is the differential of the Lyndon-Hochschild-Serre spectral sequence 
associated to the semidirect product extension 
$A \tail A \rtimes Q \two Q$. 
\end{prp}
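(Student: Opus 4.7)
The plan is to compute the differential $d^2_{p,q}$ via an explicit chain-level construction of the LHS spectral sequence, using a set-theoretic section of the extension, and compare the result with the split case. Choose a normalized section $s\colon Q\arr G$ with $s(1)=1$, so that the formula $s(q_1)s(q_2)=f(q_1,q_2)\,s(q_1 q_2)$ defines a normalized 2-cocycle $f\in Z^2(Q,A)$ whose class is $\rho$, by \cite[Chap.~IV, Theorem~3.12]{brown1994}. Using $s$, express the bar resolution $B_*(G)$ as a twisted tensor product of the bar resolutions $B_*(A)$ and $B_*(Q)$: the underlying chain groups decompose as tensor products, but the differential contains an extra term depending on $f$, measuring the failure of $s$ to be a homomorphism. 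Filtering by $A$-degree (equivalently, by powers of the augmentation ideal of $\Z A$) yields the LHS spectral sequence with $E^2_{p,q}=H_p(Q,H_q(A,M))$.

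When $f=0$ this construction recovers the LHS spectral sequence of the semidirect product $A\rtimes Q$, whose $d^2$ differential is $d'^2_{p,q}$ by definition. For general $f$, the $f$-twisted terms of the differential of $B_*(G)$ produce an additional contribution at the $d^2$ level. Concretely, represent a class in $E^2_{p,q}$ by a cycle of the form $\xi\otimes\eta$, with $\xi$ a $p$-chain on $Q$ and $\eta$ a $q$-cycle on $A$; lift it to the total complex, apply the full differential of $B_*(G)$, and read off the resulting class modulo the filtration. The $f$-independent terms yield $d'^2_{p,q}$. The remaining terms evaluate $f$ on two consecutive $Q$-factors of $\xi$, producing an element of $A$, which is then combined with $\eta$ via the shuffle product $\phi\colon A\otimes_\Z H_q(A,M)\arr H_{q+1}(A,M)$.

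This correction term is exactly the chain-level realization of $\Delta(\rho)=H_{p-2}(\id_Q,\phi)\circ(\rho\cap-)$: cap product of a $p$-cycle on $Q$ with the 2-cocycle $f$ consists of evaluating $f$ on two adjacent factors and retaining the remaining factors as a $(p-2)$-cycle with coefficients in $A$. The main obstacle is the chain-level bookkeeping: tracking signs and normalization conventions through the bar resolution, ensuring the $f$-dependent correction descends to a well-defined map on $E^2$, and verifying that this descent does not depend on the chosen section $s$. The last point follows because different sections produce cohomologous cocycles, and cap product only depends on the cohomology class. Once the chain-level identification is accomplished, the equality $d^2_{p,q}=d'^2_{p,q}+\Delta(\rho)$ follows at once.
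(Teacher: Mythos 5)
The paper does not actually prove this proposition: its ``proof'' is the single citation \cite[p. 2670]{andre1965}, so any argument you supply is by construction a different route from the text, and what you propose is in substance the classical argument that André's note records. Your strategy is sound: build a free resolution of $\Z$ over $\Z G$ as a twisted tensor product of the bar resolutions of $A$ and $Q$ by means of a normalized set-theoretic section $s$, so that the perturbation of the differential is controlled by the $2$-cocycle $f$ representing $\rho$; the $f$-independent part of the induced $d^2$ is the differential of the split extension, and the $f$-dependent cross-term is the chain-level cap product $\rho\cap-$ followed by the Pontryagin (shuffle) product $\phi$, which is exactly $\Delta(\rho)$. Two caveats. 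First, a slip in the setup: the filtration that yields the Lyndon--Hochschild--Serre spectral sequence with $E^2_{p,q}=H_p(Q,H_q(A,M))$ is the filtration by the $Q$-degree $p$ (the number of bar factors lying outside $A$), not by $A$-degree; filtering the complementary way produces the other spectral sequence of the double complex, so this should be corrected before any computation is attempted. Second, everything you defer as ``chain-level bookkeeping'' is where the content of the proposition lives: one must write down the twisted differential explicitly (Wall's resolution for group extensions is the standard device), check that its components in filtration degrees $0$, $-1$, $-2$ are respectively the $A$-differential, the twisted $Q$-differential, and the $f$-term with no further contributions to $d^2$, fix the signs, and verify that the resulting correction descends to $E^2$ and is independent of $s$. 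None of this is conceptually difficult, but as written your text is an outline of André's proof rather than a proof; note also that for the application in this paper only the case $p=2$ with $A$ central is used, and writing out the verification completely in that special case would already suffice and be considerably shorter.
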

\begin{proof}
See \cite[p. 2670]{andre1965}
\end{proof}

Now let $A$ be a central subgroup of $G$.
Then the conjugate action of $Q$ on $A$ is trivial and thus 
$A \rtimes Q=A \times Q$. It is well-known and easy to prove 
that in this case, for any $p$ and $q$, $d'^2_{p,q}=0$ and therefore 
\begin{align}\label{d-rho}
d_{p,q}^2=\Delta(\rho).
\end{align}

Moreover, since $A$ is central, the action of $Q$ on $H_q(A,M)$ is trivial. 
Thus for $M=\q$, the Universal Coefficient Theorem implies that
\[
\begin{array}{c}
{}_\q \E^2_{p,q}=H_p(Q,\Z)\otimes_\Z H_q(A,\q)\simeq
H_p(Q,\Z)\otimes_\Z \bigwedge_\q^q (A \otimes_\Z \q).
\end{array}
\]  
If $p=2$, then (\ref{cup-prod}) finds the following form
\[
H^2(Q, A) \otimes_\Z H_2(Q,\Z) \otimes_\Z H_q(A,\q)  
\overset{(- \cap -)\otimes \id}{-\!\!\!-\!\!\!-\!\!\!-\!\!\!\larr} 
A\otimes _\Z H_q(A,\q) \overset{\phi}{\arr} H_{q+1}(A,\q),
\]
where \[
-\cap -: H^2(Q, A) \otimes_\Z H_2(Q,\Z) \arr A
\]
is the cap product. Therefore from formula (\ref{d-rho}), we obtain the 
following explicit formula
\[
\begin{array}{c}
d_{2,q}^2: {}_\q\E_{2,q}^2=H_2(Q,\Z)\otimes_\Z \bigwedge_\q^q (A \otimes_\Z \q) \arr 
{}_\q\E_{0,q+1}^2=\bigwedge_\q^{q+1} (A \otimes_\Z \q),
\end{array}
\]
\[
x \otimes (a_1\wedge \dots\wedge a_{q})
\mapsto (\rho \cap x)\wedge a_1\wedge \dots \wedge a_{q}.
\]
Thus we have proved  the following proposition.

\begin{prp}\label{d-spectral}
Let $G$ be a group, $A$ a central  subgroup of $G$ and $Q:=G/A$. Let 
\[
{}_\q\E^2_{p,q}=H_p(Q ,H_q(A ,\q ))\Rightarrow H_{p+q}(G,\q)
\]
be the Lyndon-Hochschild-Serre spectral sequence associated
to the extension $A \tail G \two Q$. Then for any $q\geq 0$, the differential 
\[
\begin{array}{c}
d_{2,q}^2: {}_\q\E_{2,q}^2=H_2(Q,\Z)\otimes_\Z \bigwedge_\q^q (A \otimes_\Z \q) \arr 
{}_\q\E_{0,q+1}^2=\bigwedge_\q^{q+1} (A \otimes_\Z \q),
\end{array}
\]
is given by the formula
$x \otimes (a_1\wedge \dots\wedge a_{q})
\mapsto (\rho \cap x)\wedge a_1\wedge \dots \wedge a_{q}$. Here $\rho$ is the element of $H^2(G,A)$
associated to the above extension and the map $-\cap -: H^2(Q, A) \otimes_\Z H_2(Q,\Z) \arr A$ is the
cap product. If $A$ is torsion free, then the same result is true for 
$d_{2,q}^2: {}_\Z\E^2_{2,q} \arr {}_\Z\E^2_{0,q+1}$.
\end{prp}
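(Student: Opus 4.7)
The plan is to leverage equation (\ref{d-rho}) from the excerpt, which already establishes $d^2_{p,q} = \Delta(\rho)$ for a central extension (via Andr\'e's decomposition in Proposition \ref{andre} combined with the collapse of the spectral sequence for a direct product $A \times Q$). So the work that remains is to unwind the composite $\Delta(\rho) = H_{p-2}(\id_Q, \phi) \circ (\rho \cap -)$ explicitly when $p = 2$ and $M = \q$, and to identify the result with the stated wedge-product formula.

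The first step is to rewrite the $E^2$-page. Centrality of $A$ makes the conjugation action of $Q$ on $H_q(A,\q)$ trivial, so by the Universal Coefficient Theorem (no Tor contribution over $\q$) one has $H_p(Q, H_q(A, \q)) \simeq H_p(Q, \Z) \otimes_\Z H_q(A, \q)$, and $H_q(A,\q) \simeq \bigwedge^q_\q(A \otimes_\Z \q)$ since $A$ is abelian. For $p = 2$ this identifies the source of $d^2_{2,q}$, and setting $p-2 = 0$ identifies the target as $A \otimes_\Z \bigwedge^q_\q(A \otimes \q)$ before applying $\phi$, and as $\bigwedge^{q+1}_\q(A \otimes \q)$ afterwards.

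Next, I would compute the effect of $\rho \cap -$ on a pure tensor $x \otimes (a_1 \wedge \cdots \wedge a_q)$. By naturality of the cap product in the coefficient module, $(\rho \cap -) : H_2(Q, H_q(A, \q)) \to H_0(Q, A \otimes_\Z H_q(A,\q)) = A \otimes_\Z H_q(A, \q)$ factors as the cap product pairing $- \cap - : H^2(Q, A) \otimes H_2(Q, \Z) \to A$ tensored with the identity on $H_q(A, \q)$, and hence sends $x \otimes (a_1 \wedge \cdots \wedge a_q)$ to $(\rho \cap x) \otimes (a_1 \wedge \cdots \wedge a_q)$. Postcomposing with $\phi$, which for an abelian group agrees with the Pontryagin product and thus with exterior multiplication under the identification $H_*(A, \q) \simeq \bigwedge^*_\q(A \otimes \q)$, yields $(\rho \cap x) \wedge a_1 \wedge \cdots \wedge a_q$, as claimed.

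Finally, for the torsion-free integral variant, the same chain of identifications goes through over $\Z$: $A$ torsion free implies $H_q(A, \Z) \simeq \bigwedge^q_\Z A$ is torsion free, so the Universal Coefficient Theorem splits without Tor-terms and the Pontryagin product is still exterior multiplication. The main obstacles are really bookkeeping ones: verifying the naturality factorization of the cap product across the tensor factors, and confirming that the Pontryagin product on $H_*(A, \q)$ recovers the exterior algebra. Both are standard, so the argument is essentially a careful unpacking of definitions.
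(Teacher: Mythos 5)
Your proposal is correct and follows essentially the same route as the paper: starting from the identity $d^2_{p,q}=\Delta(\rho)$ for a central extension (Andr\'e's decomposition plus vanishing of the differential for the direct product), then using centrality and the Universal Coefficient Theorem to identify the $E^2$-terms, factoring $\rho\cap-$ through the pairing $H^2(Q,A)\otimes_\Z H_2(Q,\Z)\to A$ tensored with the identity, and identifying $\phi$ with exterior multiplication. The only difference is that you spell out the naturality and Pontryagin-product justifications that the paper leaves implicit.
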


The following corollary will be needed in the next section.

\begin{cor}\label{d-surjective}
Let $G$, $A$, $Q$ and ${}_\q\E^2_{p,q}$ be as in Proposition \ref{d-spectral}.
If $A \se Z(G)\cap G'$, then $d_{2,q}^2: {}_\q\E^2_{2,q} \arr {}_\q\E^2_{0,q+1}$ 
is surjective for any $q\geq 0$ and therefore ${}_\q\E^\infty_{0,q}= {}_\q\E^3_{0,q}=0$. 
Moreover, if $A$ is torsion free, then the same results hold for
$d_{2,q}^2: {}_\Z\E^2_{2,q} \arr {}_\Z\E^2_{0,q+1}$.
\end{cor}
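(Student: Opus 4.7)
The plan is to unwind the explicit formula for $d^2_{2,q}$ from Proposition~\ref{d-spectral} and reduce surjectivity to a single, well-known fact about central extensions: the image of the transgression $H_2(Q,\Z) \to A$ is $A \cap G'$.

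\textbf{Step 1: Reduction to the case $q = 0$.} By Proposition~\ref{d-spectral}, the differential is
\[
d_{2,q}^2(x \otimes (a_1 \wedge \cdots \wedge a_q)) = (\rho \cap x) \wedge a_1 \wedge \cdots \wedge a_q.
\]
Since $\bigwedge^{q+1}_\q(A \otimes_\Z \q)$ is spanned (as a $\q$-vector space) by decomposable elements $b_0 \wedge b_1 \wedge \cdots \wedge b_q$, the map $d^2_{2,q}$ will be surjective as soon as every $b_0 \in A \otimes_\Z \q$ is a $\q$-linear combination of elements of the form $\rho \cap x$ with $x \in H_2(Q,\Z)$. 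In other words, it suffices to show that the cap product map $\rho \cap - : H_2(Q,\Z) \to A$ is surjective after tensoring with $\q$.

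\textbf{Step 2: Identifying the image of $\rho \cap -$.} The case $q=0$ of Proposition~\ref{d-spectral} identifies $\rho \cap -: H_2(Q,\Z) \to A$ with the differential $d^2_{2,0}$ of the integral Lyndon--Hochschild--Serre spectral sequence with trivial $\Z$-coefficients, i.e.\ with the transgression in the five-term exact sequence of the central extension $A \tail G \two Q$:
\[
H_2(G,\Z) \larr H_2(Q,\Z) \overset{\rho \cap -}{\larr} A \larr G^{\mathrm{ab}} \larr Q^{\mathrm{ab}} \larr 0.
\]
By exactness, the image of $\rho \cap -$ is exactly the kernel of $A \to G^{\mathrm{ab}}$, which is $A \cap G'$. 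Under the hypothesis $A \subseteq G'$ this kernel is all of $A$, so $\rho \cap -: H_2(Q,\Z) \to A$ is already surjective integrally; tensoring with $\q$ preserves surjectivity, and combining with Step~1 gives surjectivity of $d^2_{2,q}$ for every $q \geq 0$. If $A$ is torsion free, the same integral argument applies verbatim to $d^2_{2,q}: {}_\Z \E^2_{2,q} \to {}_\Z \E^2_{0,q+1}$ using the integral formula from Proposition~\ref{d-spectral}.

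\textbf{Step 3: Consequence for the $E^\infty$-page.} The only potentially nonzero differential landing at ${}_\q\E^2_{0,q+1}$ is $d^2_{2,q}$, which we have just shown to be surjective; hence ${}_\q\E^3_{0,q+1} = \coker(d^2_{2,q}) = 0$. For $r \geq 3$, every outgoing differential from ${}_\q\E^r_{0,q+1}$ lands in the zero column $\{-r\}\times\N$, and every incoming differential arrives from a subquotient of ${}_\q\E^3_{0,q+1}$, which is zero. Therefore ${}_\q\E^\infty_{0,q+1} = {}_\q\E^3_{0,q+1} = 0$, and the torsion-free case is identical.

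\textbf{Main obstacle.} There is no real calculation to grind through; the only nontrivial input is the identification in Step~2 of the differential $d^2_{2,0}$ with the transgression appearing in the five-term exact sequence, together with the standard computation of its image as $A \cap G'$. Both are classical consequences of the setup, so the corollary is essentially a packaging of Proposition~\ref{d-spectral} with the hypothesis $A \subseteq G'$.
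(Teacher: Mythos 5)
Your proposal is correct and follows essentially the same route as the paper: both reduce surjectivity of $d^2_{2,q}$ to surjectivity of $\rho\cap -:H_2(Q,\Z)\to A$ via the explicit formula of Proposition~\ref{d-spectral}, and both obtain the latter from the five-term exact sequence (the paper phrases it as $H_1(G,\Z)\simeq H_1(Q,\Z)$ forcing $d^2_{2,0}$ to be onto, which is the same as your identification of the image with $A\cap G'=A$). No substantive difference.
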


\begin{proof}
The spectral sequence ${}_M\E^2_{p,q}$,
gives us the five term exact sequence
\[
H_2(G,M)\arr H_2(Q,M)\overset{d_{2,0}^2}{\larr} H_1(A,M)_{Q}\arr 
H_1(G,M)\arr H_1(Q,M)\arr 0,
\]
\cite[Chap.~VII, Corollary~6.4]{brown1994}. Clearly $H_1(G,\Z)\simeq H_1(Q,\Z)\simeq G/G'$.
Since the action of $Q$ on $A$ is trivial, we have 
$H_1(A,\Z)_{Q}\simeq H_1(A,\Z)=A$. Thus from the above exact sequence,
we obtain the surjective map 
\[
d_{2,0}^2:H_2(Q,\Z)\two A.
\]
But, from the above, we know that this map is given by the formula $x \mapsto \rho \cap x$.
Now by Proposition \ref{d-spectral}, $d_{2,q}^2$ is surjective and
this immediately implies that $\E^\infty_{0,q}= \E^3_{0,q}=0$.
\end{proof}

\section{Homology of nilpotent groups}\label{section-nilp}

Let $N$ be a nilpotent group of class $c$ and consider its lower central series,
\[
1=\gamma_{c+1}(N)\subset \gamma_{c}(N)\subset \cdots \subset 
\gamma_{2}(N)\subset \gamma_{1}(N)=N.
\]
From the exact sequence $ \gamma_{c}(N) \tail N \two N/\gamma_{c}(N)$, we obtain
the Lyndon-Hochschild-Serre spectral sequence 
\begin{align}\label{spectral}
E^2_{p,q}=H_p(N/\gamma_{c}(N) ,H_q(\gamma_{c}(N) , T )) \Rightarrow H_{p+q}(N, T),
\end{align}
where $T$ is a $N$-module.

Since $\gamma_{c+1}(N)=[\gamma_{c}(N),N]=1$, it follows that $\gamma_{c}(N)\se Z(N)$.
So the conjugate action of $N/\gamma_{c}(N)$ on $\gamma_{c}(N)$ is trivial. 
This also implies that
the action of $N/\gamma_{c}(N)$ on $H_q(\gamma_{c}(N) ,T )$ is trivial, provided that
the action of $N$ on $T$ is trivial.

\begin{thm}\label{nil-filtration}
Let $N$ be a nilpotent group of class $c$. Then there exists a natural 
filtration of $H_j(N,\q)$,
\[
0=E_0 \se E_1 \se \cdots \se E_{l-1} \se E_l= H_j(N,\q),
\]
such that for any $0\leq k \leq l,$ $E_k/E_{k-1}$ is a natural 
subquotient of a vector space from the set 
$\{\bigotimes^s_\q V\}_{0 \leq s \leq c(j-1)+1}$, where $V:=(N/N')\otimes_\Z \q$.
\end{thm}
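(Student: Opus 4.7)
The plan is to proceed by induction on the nilpotency class $c$. When $c=1$ the group $N$ is abelian and $H_j(N,\q) \simeq \bigwedge^j_\q V$ is a natural quotient of $\bigotimes^j_\q V$; since $j = c(j-1)+1$ in this case, the trivial filtration $0 \se H_j(N,\q)$ works. For the inductive step with $c \geq 2$, the plan is to apply the Lyndon--Hochschild--Serre spectral sequence (\ref{spectral}) to the central extension $\gamma_c(N) \tail N \two N/\gamma_c(N)$ and combine the filtration it induces on $H_j(N,\q)$ with the filtration supplied by the induction hypothesis on the quotient $N/\gamma_c(N)$.

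The crucial observation is that $\gamma_c(N)$ is central in $N$ (since $\gamma_{c+1}(N)=1$) and, for $c \geq 2$, lies in $N'$; hence $\gamma_c(N) \se Z(N)\cap N'$. Corollary~\ref{d-surjective} then forces ${}_\q E^\infty_{0,q}=0$ for every $q\geq 0$, so only columns with $p\geq 1$ survive on the $E^\infty$-page along the diagonal $p+q=j$. For such $p$, triviality of the $N/\gamma_c(N)$-action on $H_q(\gamma_c(N),\q)$ (again by centrality) gives
\[
{}_\q E^2_{p,q} \simeq H_p(N/\gamma_c(N),\q) \otimes_\q \bigwedge^q_\q U, \qquad U := \gamma_c(N) \otimes_\Z \q.
\]
The quotient $N/\gamma_c(N)$ is nilpotent of class at most $c-1$, and because $\gamma_c(N) \se N'$ its abelianization tensored with $\q$ is again $V$. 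The inductive hypothesis therefore endows $H_p(N/\gamma_c(N),\q)$ with a natural filtration whose successive quotients are subquotients of $\bigotimes^s_\q V$ with $s \leq (c-1)(p-1)+1$. Using the classical surjection $\bigotimes^c_\q V \two U$ induced by iterated commutators, $\bigwedge^q_\q U$ is a natural subquotient of $\bigotimes^{cq}_\q V$. Tensoring the two filtrations, ${}_\q E^2_{p,q}$ receives a natural filtration whose subquotients are subquotients of $\bigotimes^{s+cq}_\q V$ with
\[
s + cq \;\leq\; (c-1)(p-1)+1 + cq \;=\; c(p+q)-p-c+2 \;\leq\; c(j-1)+1
\]
whenever $p \geq 1$. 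Pulling this filtration back to the subquotient ${}_\q E^\infty_{p,q}$ of ${}_\q E^2_{p,q}$ and then splicing these filtrations along the spectral sequence filtration $0 = F_{-1} \se F_0 \se \cdots \se F_j = H_j(N,\q)$ (with $F_p/F_{p-1} \simeq {}_\q E^\infty_{p,j-p}$) yields the required filtration.

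The main obstacle is precisely the $p=0$ column: there ${}_\q E^2_{0,j} \simeq \bigwedge^j_\q U$ is only a subquotient of $\bigotimes^{cj}_\q V$, and $cj$ exceeds the target bound $c(j-1)+1$ whenever $c\geq 2$. This is exactly the column annihilated by Corollary~\ref{d-surjective}, which is why that result is indispensable. The rest of the argument is bookkeeping: naturality of the spectral sequence filtration, naturality of the tensor-product filtration obtained from the inductive hypothesis, and naturality of the commutator surjection $\bigotimes^c_\q V \two U$.
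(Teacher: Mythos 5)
Your proposal is correct and follows essentially the same route as the paper: induction on the nilpotency class, the Lyndon--Hochschild--Serre spectral sequence of the central extension $\gamma_c(N) \tail N \two N/\gamma_c(N)$, Corollary~\ref{d-surjective} to annihilate the $p=0$ column, the commutator surjection $\bigotimes^c V \two \gamma_c(N)\otimes_\Z\q$, and the same arithmetic $(c-1)(p-1)+1+c q = c(j-1)-p+2 \leq c(j-1)+1$ for $p\geq 1$. No discrepancies to report.
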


\begin{proof}
We prove the claim by induction on $c$. All filtrations, 
homomorphisms and subquotients that will be considered in this proof are natural. 
If $c=1$, then $N'=\gamma_{2}(N)=1$. Thus $N$ is abelian and 
by \cite[Theorem 6.4, Chap. V]{brown1994} we have
\[
\begin{array}{c}
H_j(N,\q)\simeq (\bigwedge^j_\Z N) \otimes_\Z \q \simeq \bigwedge^j_\q V.
\end{array}
\]
Clearly $\bigwedge^j_\q V$ is of the form $(\bigotimes^j_\q V)/T$, for some subspace $T$ of
$\bigotimes^j_\q V$. Since $j=1(j-1) +1=c(j-1)+1$, our claim is valid for $c=1$. 

Now let $c\geq 2$ and assume that the claim of the theorem is true for all nilpotent groups of 
class $d$, $1 \leq d \leq c-1$. 
The spectral sequence (\ref{spectral}) gives us a filtration of $H_j(N,\q)$
\[
0=F_{-1}H_j \se F_0H_j \se \cdots \se F_{j-1}H_j \se F_jH_j= H_j(N,\q),
\]
such that $E^\infty_{i,j-i}\simeq F_iH_j/F_{i-1}H_j$, $0\leq i \leq j$. 
By Corollary \ref{d-surjective},
$E^\infty_{0,j}=0$, so $F_0H_j=F_0H_j/F_{-1}H_j\simeq E^\infty_{0,j}=0$.  

We know that $E^\infty_{i,j-i}$ is a subquotient of
\[
E^2_{i,j-i} \simeq H_i(N /
\gamma_{c}(N),\q)\otimes_\q H_{j-i}(\gamma_{c}(N),\q ).
\]
The group $\gamma_{c}(N)$ is abelian, so 
\[
\begin{array}{c}
H_{j-i}(\gamma_{c}(N),\q)\simeq \bigwedge^{j-i}_\q ( \gamma_{c}(N)\otimes_\Z \q ).
\end{array}
\]
There is a natural surjective map $\bigotimes^{c}_\Z (N/N')\two \gamma_{c}(N)$,
which induces a surjective map
\[
\begin{array}{c}
\bigwedge^{j-i}_\q (\bigotimes_\q^c V) \two \bigwedge^{j-i}_\q ( \gamma_{c}(N)\otimes_\Z \q )
\end{array}
\]
and clearly from this we obtain a surjective map
\begin{align}
\begin{array}{c}
\bigotimes_\q^{c(j-i)} V \two H_{j-i}(\gamma_{c}(N),\q).
\end{array}
\end{align}
This implies that $F_iH_j/F_{i-1}H_j$ is a subquotient of
\begin{equation} \label{1}
\begin{array}{c}
H_i(N /\gamma_{c}(N),\q) \otimes_\q \bigotimes_\q^{c(j-i)} V.
\end{array}
\end{equation}
 On the other hand, since $N/\gamma_{c}(N)$ is nilpotent
of class $c-1$, by the induction hypothesis, for any $1\leq i \leq j$, we have a filtration of
$H_{i}(N/\gamma_{c}(N),\q)$,
\[
0=G_{0,i} \se G_{1,i} \se \cdots \se G_{k_i-1,i} \se G_{k_i,i}= H_{i}(N/ \gamma_{c}(N),\q)
\]
such that for any $0\leq t \leq k_i$, $G_{t,i}/G_{t-1,i}$ is a subquotient of some 
$\bigotimes_\q^{s_{t,i}} V$, where $0\leq s_{t,i}\leq (c-1)(i-1)+1$.
(Note that $(N/\gamma_{c}(N))/(N/\gamma_{c}(N))'=N/N')$.
This together with (\ref{1}) imply that $F_iH_j/F_{i-1}H_j$ is a subquotient of some
$\bigotimes_\q^{s_{i}} V$, where 
\[
0\leq s_{i}\leq (c-1)(i-1)+1 + c(j-i) = c(j-1) - i + 2 \leq c(j-1) + 1.
\]
This finishes the induction step and so the proof of the theorem. 
\end{proof}

With some restriction on $N$, one can obtain similar results for integral homology.

\begin{prp}\label{nil-filtration1}
Let $N$ be a free nilpotent group of class $c$. 
Then there exists a natural filtration of $H_j(N,\Z)$,
\[
0=E_0 \se E_1 \se \cdots \se E_{l-1} \se E_l= H_j(N,\Z),
\]
such that for any $0\leq k \leq l,$ $E_k/E_{k-1}$ is a natural subquotient of 
a $\Z$-module from the set 
$\{\bigotimes^s_\q V\}_{0 \leq s \leq c(j-1)+1}$, where $V:=N/N'$. 
\end{prp}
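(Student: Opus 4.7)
The plan is to run the same induction on the nilpotency class $c$ used in Theorem \ref{nil-filtration}, this time over $\Z$ rather than $\q$. For a free nilpotent group $N$, two features make the passage from $\q$ to $\Z$ possible: by the classical Magnus--Witt theorem each successive quotient $\gamma_i(N)/\gamma_{i+1}(N)$ of the lower central series is \emph{free abelian}, and the iterated commutator map gives a natural surjection $\bigotimes^c_\Z V \two \gamma_c(N)$. Moreover $N/\gamma_c(N)$ is again free nilpotent, of class $c-1$, keeping the induction inside the appropriate class of groups.

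The base case $c=1$ is immediate: $N$ is free abelian, so $H_j(N,\Z) \simeq \bigwedge^j_\Z V$ is a natural quotient of $\bigotimes^j_\Z V = \bigotimes^{c(j-1)+1}_\Z V$. For the inductive step $c\geq 2$, I consider the Lyndon--Hochschild--Serre spectral sequence associated to $\gamma_c(N) \tail N \two N/\gamma_c(N)$:
\[
{}_\Z\E^2_{p,q} = H_p(N/\gamma_c(N), H_q(\gamma_c(N),\Z)) \Rightarrow H_{p+q}(N,\Z).
\]
Since $\gamma_c(N) \se Z(N)\cap N'$ is torsion-free, the integral clause of Corollary \ref{d-surjective} applies and yields ${}_\Z\E^\infty_{0,j}=0$. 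Hence the induced filtration $0 = F_0H_j \se F_1H_j \se \cdots \se F_jH_j = H_j(N,\Z)$ satisfies $F_iH_j/F_{i-1}H_j \simeq {}_\Z\E^\infty_{i,j-i}$ for $1\leq i\leq j$, and each such quotient is a natural subquotient of ${}_\Z\E^2_{i,j-i}$.

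Because $\gamma_c(N)$ is central and torsion-free, $H_{j-i}(\gamma_c(N),\Z) \simeq \bigwedge^{j-i}_\Z \gamma_c(N)$ is a free $\Z$-module on which $N/\gamma_c(N)$ acts trivially. The Universal Coefficient Theorem therefore collapses (no $\tor$ contribution) to
\[
{}_\Z\E^2_{i,j-i} \simeq H_i(N/\gamma_c(N),\Z) \otimes_\Z \bigwedge\nolimits^{j-i}_\Z \gamma_c(N).
\]
The surjection $\bigotimes^c_\Z V \two \gamma_c(N)$ induces a natural surjection $\bigotimes^{c(j-i)}_\Z V \two \bigwedge^{j-i}_\Z \gamma_c(N)$, and applying the induction hypothesis to the free nilpotent group $N/\gamma_c(N)$ of class $c-1$ (whose abelianization is again $V$) produces a natural filtration of $H_i(N/\gamma_c(N),\Z)$ whose successive quotients are subquotients of $\bigotimes^s_\Z V$ with $s\leq (c-1)(i-1)+1$. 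Tensoring over $\Z$ and refining $\{F_iH_j\}$ accordingly, each refined subquotient is a natural subquotient of $\bigotimes^s_\Z V$ with $s\leq (c-1)(i-1)+1+c(j-i) = c(j-1)-i+2 \leq c(j-1)+1$, completing the induction.

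The main obstacle, compared with the rational proof, is precisely the integral torsion-freeness issue: one needs $\gamma_c(N)$ itself to be torsion-free (so that Corollary \ref{d-surjective} applies over $\Z$ and the $\tor$ term in the Universal Coefficient Theorem vanishes), and one needs the integral surjection $\bigotimes^c_\Z V \two \gamma_c(N)$. Both fail for a general nilpotent group but hold in the free case by Magnus--Witt, which is why the free nilpotent hypothesis is built into the statement.
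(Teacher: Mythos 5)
Your proposal is correct and is essentially the paper's own argument: the paper likewise uses torsion-freeness of $\gamma_c(N)$ (hence of $H_n(\gamma_c(N),\Z)\simeq\bigwedge^n_\Z\gamma_c(N)$) to kill the $\tor$ term and split ${}_\Z E^2_{i,j-i}$ as a tensor product, then runs the induction of Theorem \ref{nil-filtration} over $\Z$. Your explicit remarks that the integral clause of Corollary \ref{d-surjective} applies and that $N/\gamma_c(N)$ is again free nilpotent of class $c-1$ just spell out what the paper leaves implicit.
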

\begin{proof}
Since $N$ is a free nilpotent group, $\gamma_c(N)$ is torsion free. Thus
\[
\begin{array}{c}
H_n(\gamma_c(N),\Z)\simeq \bigwedge^n_\Z \gamma_c(N)
\end{array}
\]
(see \cite[Theorem 6.4, Chap. V]{brown1994}) and so it is torsion free.
This implies that
\[
E^2_{i,j-i} \simeq H_i(N /
\gamma_{c}(N),\Z)\otimes_\Z H_{j-i}(\gamma_{c}(N),\Z ).
\]
Now the proof is similar to the proof of Theorem \ref{nil-filtration}. 
\end{proof}

\begin{rem}
We believe that $c(j-1)+1$ is a sharp bound for the existence of
a filtration with the above property for $H_j(N, \q)$. 
At least this is true for the extreme cases $c=1$ (abelian $N$) 
or $j=1$ 
(first homology group case). Also the above proof shows that
$E_1=F_1H_j$ is a quotient of $\bigotimes_\Z^{c(j-1)+1}V$.
This gives an evidence for the fact that the bound $c(j-1)+1$ 
in Theorem \ref{nil-filtration} is sharp.
\end{rem}

\begin{rem}
If $N$ is a nilpotent group of class $c$, then the 
above theorem also is true for $H_2(N,\Z)$. By this we mean 
that there exist a natural filtration of $H_2(N,\Z)$,
\[
0=E_0 \se E_1 \se \cdots \se E_{l-1} \se E_l= H_2(N,\Z),
\]
such that for any $0\leq k \leq l, $ $E_k/E_{k-1}$ is a natural subquotient 
of a $\Z$-module from the set $\{\bigotimes^s_\Z(N/N')\}_{0 \leq s \leq c+1}$. 
This follows from the above proof, using the facts that for an abelian 
group $A$, $H_2(A, \Z)\simeq A\wedge A$ and also for $0 \leq i \leq 2$,
\[
E^2_{i,2-i}\simeq H_i(N/\gamma_{c}(N),\Z)\otimes_\Z H_{2-i}(\gamma_{c}(N) ,\Z ).
\]
If $c=2$, the complete structure of $H_2(N,\Z)$ is established 
in \cite{desiPhD}. This description is simple if $N$ is torsion-free.  
In this case $N / \gamma_2(N)$ is torsion-free and we obtain a filtration
\[
0 \se F_1H_2 \se F_2H_2=H_2(N,\Z)
\]
such that 
\[
F_1H_2\simeq \frac{(N/N') \otimes_\Z N'}{\langle 
xN'\otimes [y,z]+yN'\otimes [z,x]+zN'\otimes [x,y]\mid x,y,z \in N \rangle}
\]
and
\[
F_2H_2/F_1H_2\simeq \ker\bigg((N/N')\wedge (N/N') \larr N', xN'\wedge yN' 
\mapsto [x,y] \bigg).
\]
\end{rem}

\begin{rem}
Let $N$ be a free nilpotent group of finite rank and of class 
$c=2$. Then by \cite[p. 532]{kuzmin-semenov1998}, the differential 
\[
\begin{array}{c}
d_{p,q}^2: E_{p,q}^2=\bigwedge^p_\Z(N/N')\otimes_\Z\bigwedge^q_\Z N'\!\arr\!
E_{p-2,q+1}^2=\bigwedge^{p-2}_\Z(N/N') \otimes_\Z\bigwedge^{q+1}_\Z N'
\end{array}
\]
of the spectral sequence (\ref{spectral}) is given by the formula
\[
\hspace{-5.5 cm}
d_{p,q}^2(a_1N'\wedge \dots \wedge a_pN' \otimes x_1\wedge \dots 
\wedge x_q)=
\]
\[
\sum_{k<l}\!(-1)^{k+l-1}
a_1N'\!\wedge \dots \widehat{a_kN'} \dots \widehat{a_lN'} \dots \wedge a_pN'
\otimes [a_k,a_l]\wedge x_1\wedge \dots \wedge x_q.
\]
Also in \cite[Theorem 4]{kuzmin-semenov1998}, it is shown that
\[
H_j(N, \Z)\simeq \bigoplus_{i=1}^j E_{i,j-i}^3
\]
(note that $E_{0,j}^3=0$). This means that the filtration 
of $H_j(N,\Z)$ induced by the spectral sequence,
\[
0=F_0H_j \se F_1H_j \se \cdots \se F_{j-1}H_j \se F_jH_j= H_j(N,\Z),
\]
has the form
\[
\begin{array}{c}
F_iH_j/F_{i-1}H_j\simeq E_{i,j-i}^3\se \bigg(\bigwedge^{i}_\Z(N/N') 
\otimes_\Z\bigwedge^{j-i+1}_\Z N'\bigg)/T_{i,j-i},
\end{array}
\]
where $T_{i,j-i}$ is generated by the elements
\[
\sum_{k<l}\!(-1)^{k+l-1}
y_1\wedge \dots \wedge\widehat{y_k}\wedge \dots \wedge\widehat{y_l}\wedge \dots 
\wedge y_{i+2}\otimes [y_k,y_l]
\wedge x_1\wedge \dots \wedge x_{j-i-1},
\]
where $y_h\in N/N'$, $x_g \in N'$.
This shows that $F_1H_j\simeq E_{1,j-1}^3$ from the filtration 
is a quotient of $\bigotimes_\Z^{2j-1}(N/N')$ and is 
non-trivial. So the bound $2j-1=c(j-1)+1$ in Theorem 
\ref{nil-filtration} is sharp.
\end{rem}

\begin{cor}\label{nil-filtration-2}
Let $N \tail G \two Q$ be an exact sequence of groups, where  
$N$ is nilpotent of class $c$. Then 
there exist a natural filtration of $\q Q$-submodules 
of $H_j(N,\q)$,
\[
0=E_0 \se E_1 \se \cdots \se E_{l-1} \se E_l= H_j(N,\q),
\]
such that for any $0\leq k \leq l,$ $E_k/E_{k-1}$ is a natural subquotient 
of a $\q Q$-module from the set 
$\{\bigotimes^s_\q V\}_{0 \leq s \leq c(j-1)+1}$, 
where $V:=(N/N')\otimes_\Z \q$, and $\bigotimes^s_\q V$
is considered as a $\q Q$-module via the diagonal action of $Q$.
\end{cor}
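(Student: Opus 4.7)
The plan is to re-run the proof of Theorem \ref{nil-filtration} while tracking the $\q Q$-module structure throughout. Since $N$ is normal in $G$, conjugation gives $Q=G/N$ a well-defined action on every characteristic subgroup and characteristic quotient of $N$, in particular on $\gamma_c(N)$, on $N/\gamma_c(N)$, and on $V=(N/N')\otimes_\Z \q$. The short exact sequence $\gamma_c(N)\tail N \two N/\gamma_c(N)$ is therefore a sequence of groups with compatible $Q$-action, so the associated Lyndon-Hochschild-Serre spectral sequence (\ref{spectral}) is $\q Q$-equivariant in the sense that every $E^r_{p,q}$ carries a natural $\q Q$-module structure and every differential is $\q Q$-linear.

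First I would observe that the filtration $0=F_0 H_j \se F_1 H_j \se \cdots \se F_j H_j = H_j(N, \q)$ induced by (\ref{spectral}) is a filtration by $\q Q$-submodules, with $F_0 H_j = 0$ by Corollary \ref{d-surjective} and with each $F_i H_j / F_{i-1} H_j$ a $\q Q$-subquotient of $E^2_{i,j-i} \simeq H_i(N/\gamma_c(N), \q) \otimes_\q H_{j-i}(\gamma_c(N), \q)$. The key equivariance point is to upgrade the natural surjection $\bigotimes^c_\Z(N/N') \two \gamma_c(N)$ coming from iterated commutators to a $Q$-equivariant surjection with respect to the diagonal action; this holds because conjugation is a group automorphism and therefore commutes with the commutator bracket. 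Passing to exterior powers and rationalizing, one then obtains a $\q Q$-equivariant surjection $\bigotimes^{c(j-i)}_\q V \two H_{j-i}(\gamma_c(N), \q)$, with diagonal $Q$-action on the source.

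The induction on $c$ proceeds as in Theorem \ref{nil-filtration}. For $c=1$ the group $N$ is abelian, $H_j(N,\q)\simeq \bigwedge^j_\q V$ is a $\q Q$-quotient of $\bigotimes^j_\q V$ with diagonal action, and $j = c(j-1)+1$. For the inductive step, $N/\gamma_c(N) \tail G/\gamma_c(N) \two Q$ is again an extension of the required shape, with nilpotency class $c-1$ and the same $V$; the induction hypothesis applied to $H_i(N/\gamma_c(N), \q)$ gives a $\q Q$-filtration whose subquotients are $\q Q$-subquotients of $\bigotimes^{s}_\q V$ with $s \leq (c-1)(i-1)+1$. Tensoring with $\bigotimes^{c(j-i)}_\q V$ (still diagonally) and splicing with the spectral sequence filtration produces the desired $\q Q$-filtration of $H_j(N, \q)$, with bound $(c-1)(i-1)+1+c(j-i) \leq c(j-1)+1$.

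The main obstacle is not the counting, which is identical to that in Theorem \ref{nil-filtration}, but the bookkeeping of $\q Q$-equivariance. In particular, I would need to verify carefully that the isomorphism $H_{j-i}(\gamma_c(N), \q) \simeq \bigwedge^{j-i}_\q(\gamma_c(N) \otimes_\Z \q)$ is one of $\q Q$-modules, that the iterated-commutator map and its induced exterior-power map are $Q$-equivariant for the diagonal action, and that the tensor product of a filtration with another $\q Q$-module respects the diagonal action. Each of these is routine, but collectively they constitute the real content of the corollary beyond the statement of the theorem.
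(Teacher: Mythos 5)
Your proposal is correct and follows essentially the same route as the paper: observe that the Lyndon--Hochschild--Serre spectral sequence of $\gamma_c(N)\tail N\two N/\gamma_c(N)$ carries a natural $Q$-action making the $E^2$-terms $\q Q$-modules and the differentials $\q Q$-linear, check that the commutator surjection $\bigotimes_\Z^c(N/N')\two\gamma_c(N)$ is $Q$-equivariant for the diagonal action, and then rerun the induction of Theorem \ref{nil-filtration} verifying that every subquotient and map respects the $\q Q$-structure. The only cosmetic caveat is that $Q$ acts on $N/\gamma_c(N)$ only through its homology (it is $G$, not $Q$, that acts on the group itself, with inner automorphisms of $N$ acting trivially on homology), but this does not affect the argument.
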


\begin{proof}
We have a natural action of $Q$ on
$H_q(\gamma_c(N),\q)$ and $H_p(N/\gamma_c(N),\q)$.
From these we obtain a natural action of $Q$ on the 
Lyndon-Hochschild-Serre spectral sequence
\[
E^2_{p,q}=H_p(N/\gamma_{c}(N) ,H_q(\gamma_{c}(N) ,\q )) 
\Rightarrow H_{p+q}(N,\q).
\]
This means that the groups $E_{p,q}^2$ are $\q Q$-modules
and the differentials $d_{p,q}^2$ are homomorphisms of 
$\q Q$-modules. This implies that we have a filtration 
of $\q Q$-submodules of $H_j(N,\q)$
\[
0=F_{-1}H_j \se F_0H_j \se \cdots \se F_{j-1}H_j \se F_jH_j= H_j(N,\q),
\]
such that each $E^\infty_{i,j-i}\simeq F_iH_j/F_{i-1}H_j$, 
$0\leq i \leq j$, is an isomorphism of $\q Q$-modules.

It is also easy to see that if $\bigotimes_\Z^c(N/N')$ 
is considered as $\Z Q$-module via the diagonal action 
of $Q$, then the natural map $\bigotimes_\Z^c(N/N')\arr \gamma_c(N)$ 
is a homomorphism of $\Z Q$-modules. 
Now if we follow the proof of Theorem~\ref{nil-filtration}, 
we see that in all steps of the
proof the $\q Q$-structure is preserved. This means that 
all subquotients considered in the proof of Theorem \ref{nil-filtration} 
are $\q Q$-subquotients (i.e. the subquotient structure commutes with the $Q$-action) 
and the maps are $\q Q$-homomorphisms, etc. Therefore, as in the proof of
Theorem \ref{nil-filtration}, we obtain the desired 
filtration.
\end{proof}

\section{Nilpotent action on the homology of nilpotent groups}\label{section-nil-action}

We say that a group $G$ acts nilpotently on a $G$-module $T$, if $T$ has a filtration 
of $G$-submodules 
\[
0=T_0 \se T_1 \se \cdots \se T_{k-1} \se T_k=T,
\]
such that the action of $G$ on each quotient $T_i/T_{i-1}$ is trivial. 

Corollary \ref{nil-filtration-2} shows that if $Q=G/N$ acts nilpotently on $N/N'$, then 
it act nilpotently on $H_j(N, \q)$ for any $j\geq 0$. This fact can be generalized
as follow.

\begin{thm}\label{nilpotent-action}
Let $G$ be a group, $N$ a nilpotent normal subgroup of $G$ and let $T$ be a $G$-module.  
If $G$ acts nilpotently on $N/N'$ and $T$, then, for any $k\geq 0$, $G$ acts nilpotently on 
$H_k(N, T)$ and $H^k(N, T)$.
\end{thm}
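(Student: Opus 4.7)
The plan is to prove both the $H_k$ and $H^k$ statements simultaneously by induction on the nilpotency class $c$ of $N$, using the Lyndon-Hochschild-Serre spectral sequence. Before starting, I would record the closure properties of ``$G$ acts nilpotently'' on a $G$-module: this class is closed under taking $G$-submodules, $G$-quotients, extensions, and $\Z$-tensor products with diagonal $G$-action (via the filtration $F_k = \sum_{i+j \leq k} A_i \otimes B_j$). As consequences, in a long exact sequence of $G$-modules where every alternate term is nilpotent the remaining terms are nilpotent (splitting at each middle term into $0 \to \coker \to M \to \ker \to 0$), and in any first-quadrant spectral sequence of $G$-modules with nilpotent $E^2$-page the abutment in each total degree admits a finite filtration whose graded pieces are subquotients of $E^2$, and so is nilpotent. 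I would also observe that the iterated commutator map is a $G$-equivariant surjection $\bigotimes^c(N/N') \two \gamma_c(N)$, so nilpotence of the $G$-action on $N/N'$ propagates to $\gamma_c(N)$ by the tensor-product and quotient closures.

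For the base case $c = 1$, $N$ is abelian and $G$ acts nilpotently on $N$, so $N$ admits a $G$-stable filtration $0 = N_0 \se N_1 \se \cdots \se N_l = N$ with trivial $G$-action on the successive quotients, and I would do a secondary induction on $l$. When $l = 1$, filtering $T$ by $G$-submodules with trivial quotients reduces, via long exact sequences and the closure properties, to the case of trivial $G$-action on both $N$ and $T$, in which case $G$ acts trivially on $H_k(N,T)$ and $H^k(N,T)$ by functoriality. When $l > 1$, apply the LHS spectral sequence for $N_1 \tail N \two N/N_1$, which is $G$-equivariant because $N_1$ is $G$-stable; by the $l=1$ case, $G$ acts nilpotently on $H_q(N_1, T)$, and by the secondary induction hypothesis applied to $N/N_1$ (still abelian, filtration length $l-1$) with coefficients in this module, $G$ acts nilpotently on $E^2_{p,q} = H_p(N/N_1, H_q(N_1, T))$, and hence on the abutment.

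For the inductive step $c \geq 2$, the LHS spectral sequence for $\gamma_c(N) \tail N \two N/\gamma_c(N)$ is $G$-equivariant because $\gamma_c(N)$ is characteristic in $N$. By the base case applied to the abelian group $\gamma_c(N)$ (which carries a nilpotent $G$-action by the preliminary observation), the coefficient module $H_q(\gamma_c(N), T)$ has nilpotent $G$-action. The quotient $N/\gamma_c(N)$ has class $c-1$ and, since $\gamma_c(N) \se N'$, has abelianization $N/N'$, on which $G$ acts nilpotently by hypothesis; the main induction hypothesis applied with coefficients $H_q(\gamma_c(N), T)$ then yields nilpotent $G$-action on $E^2_{p,q}$, so the abutment $H_{p+q}(N, T)$ is nilpotent by the closure properties. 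The cohomology case is identical using the cohomological LHS. The main obstacle is the bookkeeping of $G$-equivariance across the inductive layers of LHS spectral sequences and the reductions on $T$; this follows in each case from the normal subgroup involved being $G$-stable (hence conjugation by $g \in G$ preserves all relevant structure), and once the closure properties of nilpotent actions are in hand the induction is essentially formal.
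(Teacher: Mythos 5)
Your proof is correct and follows essentially the same route as the paper's: a primary induction on the nilpotency class via the Lyndon--Hochschild--Serre spectral sequence for $\gamma_c(N)\tail N\two N/\gamma_c(N)$, the $G$-equivariant surjection $\bigotimes^c_\Z(N/N')\two\gamma_c(N)$ to propagate nilpotence to $\gamma_c(N)$, and in the abelian base case a pair of sub-inductions on the filtration lengths of $N$ and of $T$ (the paper runs these two sub-inductions in the opposite order, first fixing $T$ trivial and filtering $N$, then filtering $T$, but this is only a cosmetic reorganization).
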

\begin{proof}
We prove the claim for the homology functor. The proof for the cohomology functor is similar.
The proof is in three steps.
\medskip

{\bf Step 1.} {\it $N$ is abelian and $T$ is a trivial $G$-module}: Let
\[
0=N_0 \se N_1 \se \cdots \se N_n=N
\]
be a filtration of $N$ such that $G$ acts trivially
on each quotient $N_i/N_{i-1}$. We prove this step by induction on the length of the filtration of $N$,
i.e. on $n$. If $n=1$, then the action of $G$ on $N=N_1$ is trivial. So the action of $G$
on $H_k(N, T)$ also is trivial. From the exact sequence of groups
\[
N_1 \tail N \two N/N_1
\] 
we obtain the Lyndon-Hochschild-Serre spectral sequence
\[
{E'}^2_{p,q}=H_p(N/N_1, H_q(N_1, T )) \Rightarrow  H_{p+q}(N, T).
\]
By above, $G$ acts trivially (and so nilpotently) on $H_q(N_1, T)$. Since $G/N_1$ 
acts nilpotently on $N/N_1$ and $N/N_1$ has
a filtration of length $n-1$, by induction hypothesis
$G/N_1$, and so $G$, acts nilpotently on each ${E'}^2_{p,q}$.
Since ${E'}^\infty_{p,q}$ is a subquotient of ${E'}^2_{p,q}$, $G$ acts nilpotently on it too.
Moreover, $G$ acts naturally on the above spectral sequence which means that each
${E'}_{p,q}^2$ is a $G$-module and the differentials ${d'}_{p,q}^2$ are homomorphisms of 
$G$-modules. This implies that we have a filtration  of $G$-submodules 
\[
0=F_{-1}H_k \se F_0H_k \se \cdots \se F_{k-1}H_k \se F_kH_k= H_k(N,T),
\]
such that each isomorphism ${E'}^\infty_{i,k-i}\simeq F_iH_k/F_{i-1}H_k$ is an isomorphism of 
$G$-modules. Thus $G$ acts nilpotently on each quotient $F_iH_k/F_{i-1}H_k$. This implies that 
$G$ acts nilpotently on $H_k(N, T)$.
\medskip

{\bf Step 2.} {\it $N$ is abelian and $T$ is any $G$-module}: Let 
\[
0=T_0 \se T_1 \se \cdots \se T_l=T
\]
 be a filtration
of $T$, such that $G$ acts trivially on each quotient $T_i/T_{i-1}$. In this case
we prove the theorem by induction on $l$, the length of the filtration of $T$. If $l=1$, then the 
action of $G$ on $T=T_1$ is trivial, so we arrive at Step~1. From the exact sequence 
\[
0 \arr T_1 \arr T \arr T/T_1 \arr 0
\]
we obtain the long exact sequence
\[
\cdots \arr H_k(N, T_1) \arr H_k(N, T) \arr H_k(N, T/T_1) \arr \cdots.
\]
We know that $G$ acts nilpotently on $H_k(N, T_1)$ and by the induction hypothesis $G$ 
acts nilpotently on $H_k(N, T/T_1)$. Now the above exact sequence implies that
$G$ acts nilpotently on $H_k(N, T)$.
\medskip

{\bf Step 3.} {\it The general case}:
The proof of this step is by induction on the nilpotent class $c$ of $N$. If $c=1$, 
then $N$ is abelian and this is done in Step~2. Now assume that the claim is true 
for all nilpotent groups of class $d$,  $1\leq d \leq c-1$. Consider the lower central 
series of $N$,
\[
1=\gamma_{c+1}(N)\subset \gamma_{c}(N)\subset \cdots \subset 
\gamma_{2}(N)\subset \gamma_{1}(N)=N.
\]
Note that $\gamma_{c}(N)\se Z(N)$. The exact sequence of groups
\[
\gamma_{c}(N)  \tail   N  \two   N/ \gamma_{c}(N),
\]
gives us the Lyndon-Hochschild-Serre spectral sequence
\[
E^2_{p,q}=H_p(N/\gamma_{c}(N), H_q(\gamma_{c}(N) , T )) \Rightarrow  H_{p+q}(N, T).
\]
We have a natural surjective map 
\[
\begin{array}{c}
\bigotimes_\Z^c(N/N') \two \gamma_{c}(N)
\end{array}
\] 
which is a map of $G$-modules if we consider $\bigotimes_\Z^c (N/N')$ as a $G$-module
via the diagonal action \cite[1.2.11]{lennox-robinson2004}. Since $G$ acts nilpotently on 
$N/N'$, it also acts nilpotently on $\bigotimes_\Z^c (N/N')$. Thus through the above 
surjective map, $G$ also acts nilpotently on $\gamma_{c}(N)$. 
By Step 2, $G$ acts nilpotently on $H_q(\gamma_{c}(N), T)$. 
On the other hand, $N/\gamma_{c}(N)$ is of nilpotent class $c-1$ and $G$ acts nilpotently on 
$(N/\gamma_{c}(N))/(N/\gamma_{c}(N))'\simeq N/N'$. So by the induction hypothesis, $G$ acts
nilpotently on each $E_{p,q}^2$. Thus $G$ acts 
nilpotently on each $E^\infty_{p,q}$. Finally by the convergence of the spectral sequence, 
one can show, as in Step 1, that $G$ acts nilpotently on $H_k(N, T)$. This completes the 
proof of the theorem.
\end{proof}

If $A$ is an abelian normal subgroup of $G$, then one can show that $G$ is nilpotent 
if and only if $G/A$ is nilpotent and $G$ acts nilpotently on $A$ \cite[Proposition~4.1, 
Chap. I]{h-m-r1975}. One side of this fact can be generalized as follow.

\begin{cor}\label{nilpotent-action2}
Let $G$ be a nilpotent group, $N$ a normal subgroup of $G$ and let $T$ be a $G$-module. 
If $G$ acts nilpotently on $T$, then for any $k\geq 0$, $G/N$ acts nilpotently on 
$H_k(N, T)$ and $H^k(N, T)$.
\end{cor}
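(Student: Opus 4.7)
The plan is to apply Theorem~\ref{nilpotent-action} after verifying its hypotheses, and then to descend the resulting nilpotent $G$-action to a nilpotent $G/N$-action using the standard fact that inner automorphisms act trivially on (co)homology.

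First, since $G$ is nilpotent, every subgroup of $G$ is nilpotent, so $N$ is a nilpotent normal subgroup of $G$. Next I would check that $G$ acts nilpotently on $N/N'$ under conjugation. For this, consider the descending chain of $G$-invariant normal subgroups of $N$ obtained by iterating the commutator with $G$, namely
\[
N \supseteq [N,G] \supseteq [[N,G],G] \supseteq \cdots .
\]
An easy induction shows that the $i$-th term is contained in $N \cap \gamma_{i+1}(G)$, so if $G$ has nilpotency class $c$ the chain reaches $1$ after at most $c$ steps. By construction $G$ acts trivially on each successive quotient, so $G$ acts nilpotently on $N$; passing to the quotient by $N'$ preserves this property. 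Combined with the hypothesis that $G$ acts nilpotently on $T$, Theorem~\ref{nilpotent-action} then yields that $G$ acts nilpotently on $H_k(N,T)$ and $H^k(N,T)$ for every $k\geq 0$.

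It remains to upgrade this to a nilpotent $G/N$-action. The $G$-action on $H_*(N,T)$ and $H^*(N,T)$ factors through $G/N$, because inner automorphisms of $N$ induce the identity on its (co)homology (see e.g.\ \cite[Chap.~III, Proposition~8.1]{brown1994}). Consequently, any filtration by $G$-submodules with trivial $G$-action on successive quotients, as produced by Theorem~\ref{nilpotent-action}, is automatically a filtration by $G/N$-submodules with trivial $G/N$-action on successive quotients, giving the desired conclusion.

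The only nontrivial step is the verification that $G$ acts nilpotently on $N/N'$; once this is in hand, the substantive content has already been packaged into Theorem~\ref{nilpotent-action}, so I expect this to be the main (mild) obstacle.
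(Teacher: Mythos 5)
Your proof is correct and follows essentially the same route as the paper: both reduce the statement to Theorem~\ref{nilpotent-action} by verifying that $G$ acts nilpotently on $N/N'$, the paper by citing the standard fact that a nilpotent group acts nilpotently on any abelian normal subgroup (applied to $N/N'\unlhd G/N'$), and you by exhibiting the chain $N\supseteq[N,G]\supseteq[[N,G],G]\supseteq\cdots$ directly. Your additional step making explicit that the $G$-action on $H_k(N,T)$ and $H^k(N,T)$ factors through $G/N$ is a point the paper leaves implicit, and is handled correctly.
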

\begin{proof}
Since $G/N'$ is nilpotent and $N/N'$ is abelian, $G/N'$,
and so $G$, acts nilpotently on $N/N'$. Now the claim follows from Theorem \ref{nilpotent-action}. 
\end{proof}

\begin{lem}\label{trivial-action}
Let $G$ be a finite group, $R$ a commutative ring and $T$ an $RG$-module such that
$G$ acts nilpotently.
\par {\rm (i)} If $1/|G| \in R$, then $T$ is a trivial $G$-module.
\par {\rm (ii)} If $G$ is nilpotent, $l$-torsion and  $1/l \in R$, then $T$
is a trivial $G$-module.
\end{lem}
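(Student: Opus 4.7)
My plan is to handle (i) by exploiting the exactness of the $G$-invariants functor in the presence of the averaging idempotent, and then to reduce (ii) to (i) by showing that its hypotheses force $|G|$ itself to be a unit in $R$.

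For (i), since $|G|$ is invertible in $R$, the element $e:=\frac{1}{|G|}\sum_{g\in G}g$ is a central idempotent of $RG$ with $ge=eg=e$ for every $g\in G$. Multiplication by $e$ identifies $M^G$ with a direct summand of $M$ as $RG$-modules for every $RG$-module $M$, so the functor $(-)^G$ is exact on $RG$-modules. I then proceed by induction on the length $k$ of the filtration. For $k=1$ we have $T=T_1/T_0$, which is trivial by hypothesis. For the inductive step, the truncated filtration $0=T_0\se\cdots\se T_{k-1}$ has length $k-1$, so the inductive hypothesis yields $T_{k-1}^G=T_{k-1}$; at the same time $(T/T_{k-1})^G=T/T_{k-1}$ by assumption. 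Applying the exact functor $(-)^G$ to
\[
0\arr T_{k-1}\arr T\arr T/T_{k-1}\arr 0
\]
gives a surjection $T^G\two T/T_{k-1}$, and since $T_{k-1}\se T^G$, this forces $T^G=T$, i.e.\ $T$ is a trivial $G$-module.

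For (ii), it suffices to show $|G|$ is a unit in $R$, after which (i) applies. Writing $l=p_1^{a_1}\cdots p_r^{a_r}$, each prime factor $p_i$ divides the unit $l$ in the commutative ring $R$, so $(l/p_i)\cdot l^{-1}$ is a two-sided inverse of $p_i$, and hence every $p_i$ is a unit in $R$. Because $G$ is $l$-torsion, every element of $G$ has order a power of $l$, whence by Cauchy's theorem every prime divisor of $|G|$ appears among the $p_i$. Thus $|G|$ is a product of units of $R$ and is itself a unit, so (i) delivers the conclusion. The nilpotency hypothesis on $G$ is not logically needed for this reduction; it is included because of the form in which the lemma will be applied later.

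The main things to keep track of are $RG$-linearity throughout the induction and the application of $(-)^G$, together with the elementary commutative-ring observation that a divisor of a unit is itself a unit. I do not anticipate any genuine obstacle beyond these routine checks.
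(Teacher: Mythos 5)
Your proof is correct. For part (i) you work with the invariants functor $(-)^G$ and the averaging idempotent $e=\frac{1}{|G|}\sum_{g\in G}g$, whereas the paper works with the coinvariants functor $(-)_G$, proving its exactness via the natural map $\alpha_G\colon T^G\to T_G$ and the norm map $\overline N\colon T_G\to T^G$, whose composites are multiplication by $|G|$. These are two faces of the same averaging argument, and the subsequent induction on the length of the filtration is essentially identical, so (i) is the same proof in dual clothing.

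The genuine divergence is in part (ii). The paper does \emph{not} reduce to (i): it argues by induction on $|G|$, using nilpotency to produce a nontrivial cyclic subgroup $H$ of $Z(G)$ (whose order divides $l$, so (i) applies to $H$) and factoring $\alpha_G$ as $T^G\cong (T^H)^{G/H}\to (T_H)^{G/H}\to (T_H)_{G/H}\cong T_G$ to deduce exactness of $(-)_G$. You instead observe that, by Cauchy's theorem, every prime dividing $|G|$ divides the exponent of $G$ and hence divides $l$; since a divisor of a unit in a commutative ring is a unit, $|G|$ is a product of units and therefore invertible in $R$, so (ii) is literally a special case of (i). Your route is shorter, avoids the compositional bookkeeping with $H$ and $G/H$, and shows that the nilpotency hypothesis in (ii) is superfluous --- a small but genuine strengthening of the lemma as stated (and harmless for its applications, where the group in question is a quotient of a nilpotent group anyway). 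The only quibble is your gloss that ``$l$-torsion'' means every element has order a power of $l$; the intended meaning is $g^l=1$ for all $g$, but either reading yields the divisibility statement your argument needs.
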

\begin{proof}
(i) We know that the functor $-\otimes_G\Z=(-)_G$ is right exact. First we show that this is 
in fact an exact functor if it is considered as a functor from the category of $RG$-modules 
to the category of $R$-modules. Consider the maps 
\[
\alpha_G: T^G \arr T_G, \ \ \ \ m \mt \overline{m},
\]
and
\[
\overline{N}: T_G \arr T^G,  \ \ \ \ \overline{m} \mt Nm,
\]
where $N:=\sum_{g \in G}g \in R G$. Then clearly $\overline{N}\circ \alpha$ and
$\alpha \circ\overline{N}$ coincide with multiplication by $|G|$. Since $1/|G|\in R$,
$\alpha_G$ is an isomorphism. This implies that $(-)_G$ is exact, because $(-)^G$ is 
left exact. 
Next, let  
\[
0=T_0 \se T_1 \se \cdots \se T_k=T
\]
be a filtration of $T$ such that $G$ acts trivially on each $T_i/T_{i-1}$. 
By applying the exact functor $(-)_{G}$ to the exact sequence 
$0 \arr T_1 \arr T_2 \arr T_2/T_1 \arr 0$ and using the fact that 
$G$ acts trivially on $T_1$ and $T_2/T_1$, we see that 
\[
0 \arr T_1 \arr (T_2)_{G} \arr T_2/T_1 \arr 0
\]
is exact. Therefore $T_2\simeq (T_2)_{G}$ and so the action of $G$ on $T_2$ is trivial. 
In a similar way and by induction on $i$, one can show that the action of $G$ on each 
$T_i$ is trivial. Thus the action of $G$ on $T_k=T$ is trivial. 

(ii) 
First we prove that $(-)_G$ is exact and we do this by induction on the size of $G$. 
We may assume that $G\neq 1$. Since $G$ is nilpotent, 
$Z(G)\neq 1$. Let $H$ be a nontrivial cyclic subgroup of $Z(G)$.
Then the map $\alpha_G$ coincides with the following composition of maps
\[
T^G\overset{\simeq}{\larr} {(T^H)}^{G/H} \overset{\alpha_H}{\larr} {(T_H)}^{G/H}
\overset{\alpha_{G/H}}{\larr} {(T_H)}_{G/H} \overset{\simeq}{\larr} T_G.
\]
Now the exactness of the functor $(-)_G$ follows from (i) and the induction step.
Finally, as in (i) we can prove that $G$ acts trivially on $T$.
\end{proof}

\begin{cor}\label{nil-coristriction}
Let $G$ be a nilpotent group and $N$ a normal subgroup of $G$ such that $G/N$ is finite
and $l$-torsion. Let $R$ be a commutative ring such that $1/l \in R$ and let $T$ be an 
$RG$-module. If $G$ acts nilpotently on $T$, then, for any $k\geq 0$, the natural action of 
$G/N$ on $H_k(N, T)$ and $H^k(N, T)$ is trivial and therefore the natural maps
\[
\corr_N^G:H_k(N,T)\arr H_k(G, T), \ \ \ress_N^G:H^k(G,T)\arr H^k(N,T)
\]
are isomorphisms.
\end{cor}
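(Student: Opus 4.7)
The plan is to combine the two ingredients built up in this section: Corollary \ref{nilpotent-action2} supplies nilpotency of the $G/N$-action on the (co)homology, Lemma \ref{trivial-action}(ii) upgrades ``nilpotent'' to ``trivial'', and a standard Lyndon-Hochschild-Serre collapse then converts triviality into the asserted isomorphisms. I will write the argument for homology; cohomology is strictly analogous.

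First, since $G$ is nilpotent and acts nilpotently on $T$, Corollary \ref{nilpotent-action2} applies (with the normal subgroup $N$) and gives that $G/N$ acts nilpotently on $H_k(N,T)$ and on $H^k(N,T)$ for every $k \geq 0$. Second, as a quotient of the nilpotent group $G$, the group $G/N$ is itself nilpotent; by hypothesis it is also finite, $l$-torsion, and $1/l \in R$. Thus Lemma \ref{trivial-action}(ii), applied to $G/N$ acting on the $R[G/N]$-module $H_k(N,T)$ (respectively $H^k(N,T)$), yields that this action is trivial.

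To conclude that $\corr_N^G$ and $\ress_N^G$ are isomorphisms, I would run the Lyndon-Hochschild-Serre spectral sequence
\[
E^2_{p,q} = H_p(G/N, H_q(N,T)) \Rightarrow H_{p+q}(G,T)
\]
attached to $N \tail G \two G/N$. The finite nilpotent group $G/N$ has exponent dividing $l$, so by Cauchy's theorem every prime dividing $|G/N|$ divides $l$; since $1/l \in R$, the order $|G/N|$ is therefore a unit in $R$. A standard Maschke-type averaging argument then gives $H_p(G/N, M) = 0$ for every $p \geq 1$ and every $R[G/N]$-module $M$. The spectral sequence collapses onto the $p = 0$ column, and the edge map identifies
\[
\corr_N^G : H_k(N,T)_{G/N} \xrightarrow{\ \simeq\ } H_k(G,T).
\]
Because the $G/N$-action on $H_k(N,T)$ is trivial, the natural surjection $H_k(N,T) \two H_k(N,T)_{G/N}$ is an isomorphism, and composing with the edge map shows that $\corr_N^G : H_k(N,T) \to H_k(G,T)$ is an isomorphism. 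The same argument run with the cohomological spectral sequence gives the restriction isomorphism.

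Since every ingredient is already in place, there is no real obstacle; the only point that deserves a sentence is the observation that the $l$-torsion hypothesis on $G/N$ together with $1/l \in R$ forces $|G/N|$ itself to be invertible in $R$, which is what powers both the triviality lemma (Step~2) and the collapse of the spectral sequence (Step~3).
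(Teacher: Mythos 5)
Your proposal is correct and follows exactly the paper's route: Corollary \ref{nilpotent-action2} gives nilpotency of the $G/N$-action, Lemma \ref{trivial-action}(ii) upgrades it to triviality, and the final isomorphism statement is the standard consequence of $|G/N|$ being invertible in $R$ (the paper leaves this last step implicit, citing only the two preceding results). Your explicit remarks that $l$-torsion plus $1/l\in R$ forces $|G/N|\in R^\ast$, and that the Lyndon--Hochschild--Serre spectral sequence then collapses onto the $p=0$ column, are accurate fillings-in of details the paper omits.
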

\begin{proof}
The claim follows from  Corollary \ref{nilpotent-action2} and Lemma 
\ref{trivial-action}.
\end{proof}

\begin{cor}\label{nil-transfer}
Let $G$ be a nilpotent group and $N$ a subgroup of $G$ such that $G/N$ is finite
and $l$-torsion. Let $R$ be a commutative ring such that $1/l! \in R$ and let $T$ be an 
$RG$-module. If $G$ acts nilpotently on $T$, then, for any $k\geq 0$, the natural maps
\[
\corr_N^G:H_k(N,T)\arr H_k(G, T), \ \ \ress_N^G:H^k(G,T)\arr H^k(N,T)
\]
are isomorphisms. 
\end{cor}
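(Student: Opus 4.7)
The plan is to reduce to the normal subgroup setting and then invoke Corollary \ref{nil-coristriction}. First, let $M := \bigcap_{g \in G}gNg^{-1}$ be the normal core of $N$ in $G$, so that $M\unlhd G$ (hence also $M\unlhd N$), $M\se N$, and $G/M$ embeds into the symmetric group on the coset space $G/N$ via left translation. In particular $[G:M]$ is finite, dividing $[G:N]!$.

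The key intermediate claim is that every prime factor of $[G:M]$ divides $l$, so that $1/[G:M]\in R$. Since $G/M$ is a finite quotient of the nilpotent group $G$, it is a finite nilpotent group and hence a direct product of its Sylow subgroups. For each prime $p$ not dividing $l$, the Sylow $p$-subgroup $(G/M)_p$ is normal in $G/M$, and the index $[(G/M)_p : (G/M)_p\cap N/M]$ divides both $|(G/M)_p|$ (a $p$-power) and $[G/M:N/M]=[G:N]$ (whose prime factors divide $l$). This forces the index to be $1$, so $(G/M)_p\se N/M$. Thus the normal subgroup $\prod_{p\nmid l}(G/M)_p$ of $G/M$ lies inside $N/M$, and by the defining property of the normal core it must be trivial; hence every prime factor of $[G:M]$ divides $l$, and since $1/l!\in R$ we conclude $1/[G:M]\in R$.

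The remainder is essentially a diagram chase. I apply Corollary \ref{nil-coristriction} to the pair $M\unlhd G$, with the role of ``$l$'' there played by $[G:M]$: $G$ is nilpotent, $G/M$ is finite and trivially $[G:M]$-torsion, $G$ acts nilpotently on $T$, and $1/[G:M]\in R$; this yields isomorphisms $\corr_M^G$ and $\ress_M^G$. The same corollary applied to $M\unlhd N$ (using that $N$ is nilpotent as a subgroup of $G$, that $|N/M|$ divides $[G:M]$ and is therefore invertible in $R$, and that the $N$-action on $T$ inherited from $G$ is still nilpotent) yields isomorphisms $\corr_M^N$ and $\ress_M^N$. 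Functoriality of (co)restriction then gives $\corr_N^G\circ\corr_M^N=\corr_M^G$ and $\ress_M^N\circ\ress_N^G=\ress_M^G$, and so $\corr_N^G$ and $\ress_N^G$ are isomorphisms as well. The only step that takes actual work is the Sylow-theoretic identification of the prime factors of $[G:M]$; this is where the nilpotency of $G$ is essential, and where the hypothesis $1/l!\in R$ (rather than merely $1/l\in R$) enters when $l$ is not itself prime.
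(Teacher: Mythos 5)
Your argument is correct and is essentially the paper's own proof: both pass to a subgroup of $N$ that is normal and of finite index in $G$ (the paper takes any normal $L\se N$ with $[G:L]\leq [G:N]!$, you take the normal core $M$), apply Corollary \ref{nil-coristriction} to $M\unlhd G$ and to $M\unlhd N$, and conclude from $\corr_N^G\circ\corr_M^N=\corr_M^G$ and $\ress_M^N\circ\ress_N^G=\ress_M^G$. Your Sylow-theoretic verification that every prime dividing $[G:M]$ divides $l$ --- hence that $[G:M]$ is genuinely invertible in $R$ --- is a worthwhile addition rather than a detour, since the paper's bound $[G:L]\leq[G:N]!$ does not by itself yield $1/[G:L]\in R$ when $[G:N]>l$, and the paper leaves this point implicit.
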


\begin{proof}
It is well-known that $N$ has a subgroup $L$ such that $L$ is normal
in $G$ and $[G:L] \leq [G:N]!$. Now by Corollary \ref{nil-coristriction},
the maps 
\[
\corr_L^G:H_k(L,T)\arr H_k(G, T)\ \ \ \text{and} \ \ \ \corr_L^N:H_k(L,T)\arr H_k(N, T)
\]
are isomorphisms. Therefore $\corr_N^G:H_k(N,T)\arr H_k(G, T)$ is an isomorphism.
The cohomology case can be treated in a similar way.
\end{proof}

\begin{exa}
In general, in Corollary \ref{nil-coristriction} the condition that 
$[G:N]< \infty$ and $1/l\in R$ can not be removed. In fact, if  $N$ 
is a non-central abelian normal subgroup of a nilpotent group $G$, 
e.g. $G$ a nilpotent group of class $c=3$ and $N=G'$, then clearly 
$G$ does not act trivially 
on $H_1(N,\Z)=N$.
\end{exa}

\section{Bieri-Strebel invariant}\label{section-tame}

The main condition of our main Theorem \ref{d-f4}, proved below, is closely related to an 
invariant, introduced by Bieri and Strebel \cite{bieri-strebel1980}, which has played a 
prominent role in the study of soluble groups which are finitely presented.
 
Let $Q$ be a multiplicative finitely generated abelian group. A homomorphism of 
groups 
\[
v:Q\arr \R
\]
is called a valuation on $Q$. If $Q$ has rank $n$, then
$\Hom_\Z(Q,\R)\simeq \R^n$, so $\Hom_\Z(Q,\R)$ can be regarded as a topological vector space.
Two valuation $v$ and $v'$ on $Q$ are called equivalent if $v'=a v$ for some
$a\in \R^{>0}$. We denote
the equivalence class of $v$ by $[v]$ and the set $S(Q)$ of all equivalence classes of 
elements of $\Hom_\Z(Q, \R) \setminus \{ 0 \}$ is called the valuation sphere, which 
can be identified with the unit sphere ${\mathbb S}^{n-1}\subset \R^n$.
Notice that $S(Q)$ is empty precisely when 
$n=0$, that is, $Q$ is finite. For any valuation $v$ on $Q$ define
\[
Q_v:=\{q\in Q| v(q)\geq 0\},
\]
which is a submonoid of $Q$. 

For a ring $R$, let $R Q_v$  be the monoid ring, which clearly is a subring of $R Q$.
For a finitely generated $R Q$-module $A$, define
\[
\Sigma_A(Q):=\Big\{[v]\in S(Q) \mid A \ {\rm is \ finitely \ generated \ over} \ R Q_v \Big\}.
\]
A finitely generated $RQ$-module $A$ is called $m$-tame if for any $m$ elements 
$v_1,\dots,v_m\in 
\Hom_\Z(Q,\R)\setminus \{0\}$ with $v_1+\dots+v_m=0$, there is $1\leq i \leq m$
such that $[v_i]\in\Sigma_A(Q)$. 

\begin{thm}\label{b-g}
Let $Q$ be a finitely generated abelian group, $K$ a field, $A$  a finitely generated
$K Q$-module and $m\geq 2$ an integer. Then the following statements are equivalent:
\par {\rm (i)} $A$ is $m$-tame as $K Q$-module,
\par {\rm (ii)} $\bigotimes_K^m A$ is finitely generated as $KQ$-module
via the diagonal $Q$-action,
\par {\rm (iii)} $\bigotimes_K^i A$ are finitely generated as $KQ$-modules
via the diagonal $Q$-action for $i=2, \dots, m$,
\par {\rm (iv)} $\bigwedge_K^i A$ are finitely generated as
$KQ$-modules via the diagonal $Q$-action for $i=2, 3, \dots, m$,
\par {\rm (v)} $\bigwedge_K^m A$ is finitely generated as
$KQ$-module via the diagonal $Q$-action.
\end{thm}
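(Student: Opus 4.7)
The strategy is to separate the elementary implications from the one substantive equivalence. The easy directions I would dispatch first: (iii)$\Rightarrow$(ii) and (iv)$\Rightarrow$(v) follow by specialization to $i=m$, while (iii)$\Rightarrow$(iv) and (ii)$\Rightarrow$(v) use the observation that the natural surjection $\bigotimes_K^i A \twoheadrightarrow \bigwedge_K^i A$ is a homomorphism of $KQ$-modules when both sides carry the diagonal action, so finite generation descends to the quotient. It therefore suffices to close the cycle of equivalences by proving the two implications (i)$\Rightarrow$(iii) and (v)$\Rightarrow$(i).

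For (i)$\Rightarrow$(iii), I plan to argue by induction on $i$ using the $\Sigma$-invariant machinery developed by Bieri and Strebel. The base case $i=1$ is the given finite generation of $A$. For the inductive step the principal auxiliary lemma is that for finitely generated $KQ$-modules $B$ and $C$ carrying the diagonal $Q$-action on $B \otimes_K C$, one has $\Sigma_{B\otimes_K C}(Q) \supseteq \Sigma_B(Q) \cap \Sigma_C(Q)$. Combining this with the $m$-tameness of $A$ --- which forbids $m$-tuples of valuations with vanishing sum whose classes all lie outside $\Sigma_A(Q)$ --- one argues that the complement of $\Sigma_{\bigotimes_K^i A}(Q)$ in the valuation sphere becomes too small to admit any obstruction, which in turn forces $\bigotimes_K^i A$ to be finitely generated over $KQ$ for each $2 \leq i \leq m$.

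The main obstacle is the converse direction (v)$\Rightarrow$(i), which I would attack by contraposition. Assume $A$ is not $m$-tame and pick $v_1,\dots,v_m \in \Hom_\Z(Q,\R) \setminus \{0\}$ with $\sum_i v_i = 0$ and $[v_i] \notin \Sigma_A(Q)$ for all $i$. The failure of each $[v_i]$ to belong to $\Sigma_A(Q)$ supplies sequences of elements of $A$ whose $v_i$-weights tend to $-\infty$. I would combine such elements, one from each factor, into pure wedges $a_1 \wedge \cdots \wedge a_m \in \bigwedge_K^m A$ and show that under the diagonal $Q$-action no valuation $v$ on $Q$ can bound their weights from below, which would contradict finite generation of $\bigwedge_K^m A$ over $KQ$. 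The delicate point, and the main technical difficulty, is that the alternation inherent in the exterior power might permit the candidate wedges to be re-expressed using wedges of larger weight; to rule this out one must choose the $a_i$ carefully enough that the resulting wedges remain $K$-linearly independent even after reduction modulo any prescribed finite generating set, which is arranged by exploiting the freedom to adjust the $v_i$-weights independently across the factors.
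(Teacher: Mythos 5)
The first thing to say is that the paper does not actually prove this theorem: its ``proof'' is a citation of Bieri--Groves (Theorem~C) and Kochloukova (Corollary~B), so you are attempting to reprove two substantial results from scratch. Your logical skeleton is fine: the implications among (ii)--(v) via specialization and the $KQ$-equivariant surjections $\bigotimes_K^i A \two \bigwedge_K^i A$ are correct, and the cycle closes once (i)$\Rightarrow$(iii) and (v)$\Rightarrow$(i) are established. The problems are in those two implications.

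The auxiliary lemma on which you base (i)$\Rightarrow$(iii), namely $\Sigma_{B\otimes_K C}(Q)\supseteq \Sigma_B(Q)\cap\Sigma_C(Q)$, is false. Take $Q=\Z^2=\langle x,y\rangle$ and $B=C=A=KQ/(y-1)\simeq K[x^{\pm1}]$. One checks that $A$ is finitely generated over $KQ_v$ exactly when $v(y)\neq 0$, so $\Sigma_A(Q)$ is the circle $S(Q)$ minus the antipodal pair $\{[\pm v_0]\}$ with $v_0(x)=1$, $v_0(y)=0$; in particular $[w]\in\Sigma_A(Q)$ for $w(x)=0$, $w(y)=1$. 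But $A\otimes_K A\simeq K[x_1^{\pm1},x_2^{\pm1}]$ with $x$ acting diagonally as $x_1x_2$ and $y$ acting trivially is not finitely generated even over all of $KQ$ (the $KQ$-orbit of a monomial sweeps out a single line of lattice points in $\Z^2$), let alone over $KQ_w$. Note that if your lemma were true it would prove far too much: any $A$ with $\Sigma_A(Q)\neq\emptyset$ would have all its tensor powers finitely generated and hence, by the theorem itself, would be $m$-tame for every $m$ --- and the $A$ above has $\Sigma_A(Q)\neq\emptyset$ but is not even $2$-tame. The correct statement in Bieri--Groves controls the \emph{complement} of the invariant of a tensor product by sums $v_1+v_2$ with each $v_i$ taken from the complements of the factors, and deducing finite generation of $\bigotimes_K^m A$ from $m$-tameness is exactly the hard content of their Theorem~C, proved via valuations extended from $Q$ to the module and Bergman's theorem on logarithmic limit sets; it does not follow from an intersection formula. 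For (v)$\Rightarrow$(i) you have correctly located the obstacle --- alternation may destroy the wedges you build from elements of small $v_i$-weight --- but ``choose the $a_i$ carefully'' is not an argument; overcoming this for exterior (and symmetric) powers is precisely the content of Kochloukova's 1999 paper cited here. As written, both substantive implications are genuine gaps.
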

\begin{proof}
See \cite[Theorem~C]{bieri-groves1982} and \cite[Corollary~B]{kochloukova1999}.
\end{proof}

\begin{thm}\label{b-g2}
Let $ A \tail G \two Q$ be a short exact sequence of groups with both $A$ and $Q$
abelian and $G$ finitely generated. If $G$ is of type $\FP_m$, then $A\otimes_\Z K$ is $m$-tame
as a $KQ$-module for every field $K$.
\end{thm}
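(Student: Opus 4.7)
The plan is to combine Bieri's homological characterization of the $\FP_m$ condition with the Lyndon--Hochschild--Serre spectral sequence of the extension $A \tail G \two Q$, and then invoke Theorem \ref{b-g} to pass from finite generation of $\bigwedge_K^m(A\otimes_\Z K)$ as a $KQ$-module to $m$-tameness of $A\otimes_\Z K$.

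First I would recall Bieri's criterion: a group $G$ is of type $\FP_m$ if and only if for every $0\leq i<m$ the functor $H_i(G,-)$ commutes with arbitrary direct products of $\Z G$-modules (and a related finiteness condition holds in degree $m$). Consequently, for an arbitrary index set $I$, one may feed the $KG$-module $(KQ)^I$ (with $KG$-structure coming from $G\two Q$) into $H_i(G,-)$ and pull the product outside for $i<m$.

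Next I would apply the LHS spectral sequence
\[
E^2_{p,q}(M)=H_p(Q,H_q(A,M))\Rightarrow H_{p+q}(G,M)
\]
with $M$ such a $KQ$-module. Since $A$ is abelian and acts trivially on $M$, one has $H_q(A,M)\simeq \bigwedge_K^q(A\otimes_\Z K)\otimes_K M$ as $KQ$-modules with diagonal $Q$-action. The heart of the argument is now an induction on $q$ with $0\leq q\leq m$: assuming that $\bigwedge_K^j(A\otimes_\Z K)$ is already known to be finitely generated over $KQ$ for $j<q$, one tracks direct-product commutation through the $E^2$-page, the higher differentials, and the filtration on the abutment (which commutes with direct products by the $\FP_m$ hypothesis) to force $\bigwedge_K^q(A\otimes_\Z K)$ itself to be finitely generated over $KQ$. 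Once $q=m$ is reached, Theorem \ref{b-g}, equivalence (v)$\Leftrightarrow$(i), converts this finite generation into $m$-tameness of $A\otimes_\Z K$.

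The main obstacle is this inductive spectral-sequence step: one must isolate the appearance of $\bigwedge_K^q(A\otimes_\Z K)$ on the $E^2$-page while simultaneously controlling both the differentials $d^r_{p,q}$ and the extension problem on $E^\infty$, so that commutation with direct products over the family $\{(KQ)^I\}$ can be genuinely read off from the abutment and back-propagated to the top exterior power. This is precisely the technical core of the Bieri--Groves theorem, and the point at which the sharper $\FP_m$ assumption (rather than finite presentability together with $\FP_m$) requires the refinement supplied by Kochloukova in \cite{kochloukova1999}.
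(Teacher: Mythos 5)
The paper offers no independent argument for this statement: its ``proof'' is the citation to Theorem D of \cite{bieri-groves1982} (combined, via Theorem \ref{b-g}, with \cite{kochloukova1999} for the exterior-power formulation over an arbitrary field $K$). Your outline --- Bieri's direct-product criterion for $\FP_m$, the LHS spectral sequence of $A \tail G \two Q$ with coefficients in modules of the form $(KQ)^I$, and the conversion of finite generation of $\bigwedge_K^m(A\otimes_\Z K)$ into $m$-tameness via Theorem \ref{b-g} --- is a faithful reconstruction of how that cited result is proved, and since you, like the paper, defer the technical inductive step to the same references, the two are in substance the same approach.
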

\begin{proof}
See Theorem D in \cite{bieri-groves1982}.
\end{proof}

\section{Virtual rational Betti numbers of nilpotent-by-abelian groups}\label{section-virtual}

The following two theorems are taken from \cite{bridson-kochloukova2013} and 
\cite{kochloukova-mokari2014}, respectively which are very important for the study 
of virtual rational Betti numbers of abelian-by-polycyclic groups. In this section 
we will use them for the study of virtual rational Betti numbers of nilpotent-by-abelian 
groups.

\begin{thm}[Bridson-Kochloukova]\label{b-k2013}
Let $Q$ be a finitely generated abelian group and $B$ a finitely generated $\q Q$-module. 
If $B \otimes_\q B$ is a finitely generated $\q Q$-module via the diagonal action of $Q$,
then
\[
\sup_{M\in \A_Q}\dim_{\q} (B \otimes_{\q M}\q) < \infty.
\]
\end{thm}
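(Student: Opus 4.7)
The plan is to translate $\dim_\q(B\otimes_{\q M}\q)$ into fiber data on a torus and then use the hypothesis to bound it geometrically. First I would extend scalars to $\bar\q$ and regard $B$ as a coherent sheaf $\tilde B$ on the torus $T=\operatorname{Spec}(\bar\q Q)=\Hom(Q,\bar\q^\times)$. For a finite-index subgroup $M\leq Q$, decomposing the finite group algebra $\bar\q[Q/M]$ into its one-dimensional characters yields
\[
\dim_\q (B\otimes_{\q M}\q)=\sum_{\chi\in M^\perp}\dim_{\bar\q}B_\chi,
\]
where $M^\perp=\{\chi\in T:\chi|_M=1\}$ is a finite subgroup of $T$ and $B_\chi=B\otimes_{\bar\q Q}\bar\q_\chi$ is the fiber at $\chi$. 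As $M$ ranges over all finite-index subgroups of $Q$, the union $\bigcup_M M^\perp$ is precisely the set $T_{\mathrm{tors}}$ of torsion characters of $Q$. Because each $B_\chi$ is finite-dimensional (as $B$ is finitely generated), the theorem reduces to showing that $\operatorname{Supp}(\tilde B)\cap T_{\mathrm{tors}}$ is a \emph{finite} set.

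To obtain this finiteness I would exploit the hypothesis geometrically. By Theorem~\ref{b-g}, finite generation of $B\otimes_\q B$ under the diagonal $Q$-action is equivalent to $B$ being $2$-tame. The module $B\otimes_\q B$ with diagonal action corresponds to the pushforward of $\tilde B\boxtimes\tilde B$ on $T\times T$ under the multiplication map $m:T\times T\to T$, $(\chi_1,\chi_2)\mapsto\chi_1\chi_2$; finite generation over $\q Q$ forces this pushforward to be coherent, that is, $m$ must be finite on $\operatorname{Supp}(\tilde B)\times\operatorname{Supp}(\tilde B)$. Hence $\operatorname{Supp}(\tilde B)$ cannot contain any positive-dimensional coset $\chi_0 S_L$ of a subtorus $S_L=\{\chi:\chi|_L=1\}$ (for some $L\subsetneq Q$ with $\operatorname{rk}(Q/L)\geq 1$), because $(\chi_0 S_L)\cdot(\chi_0 S_L)=\chi_0^2 S_L$ has the same dimension as $\chi_0 S_L$, giving $m$ positive-dimensional fibers. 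With this restriction on $\operatorname{Supp}(\tilde B)$, Laurent's theorem (the toric case of Manin--Mumford) yields that $\operatorname{Supp}(\tilde B)\cap T_{\mathrm{tors}}$ is finite, producing the desired uniform bound $\sum_{\chi\in \operatorname{Supp}(\tilde B)\cap T_{\mathrm{tors}}}\dim B_\chi$.

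The main obstacle is the geometric consequence of $2$-tameness just used, namely that $\operatorname{Supp}(\tilde B)$ avoids positive-dimensional subtorus cosets. An alternative, more combinatorial route proceeds directly via the Bieri--Strebel invariant: if $\operatorname{Supp}(\tilde B)\supseteq\chi_0 S_L$, one extracts a quotient $B'$ of $B$ supported on this coset, picks a nonzero valuation $v\in\Hom(Q,\R)$ vanishing on $L$, and verifies that neither $[v]$ nor $[-v]$ lies in $\Sigma_{B'}(Q)$; since $\Sigma$ only shrinks under quotients, the same holds for $B$, contradicting $2$-tameness. Either approach requires some care in handling torsion in $Q$ and passing between $\q$ and $\bar\q$, and invoking Laurent's theorem has the advantage of sidestepping the delicate analysis of when a subvariety of $T$ can contain infinitely many torsion points.
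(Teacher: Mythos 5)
The paper does not actually prove this statement: it is imported from Bridson--Kochloukova and the ``proof'' given is just the citation of \cite[Theorem~3.1]{bridson-kochloukova2013}, so there is no internal argument to compare against. Your main route is correct. The fibre decomposition $\dim_\q(B\otimes_{\q M}\q)=\sum_{\chi\in M^{\perp}}\dim_{\bar\q}B_\chi$ is right (via $B\otimes_{\q M}\q\simeq B\otimes_{\q Q}\q[Q/M]$ and the character decomposition of $\bar\q[Q/M]$), the reduction to finiteness of $\operatorname{Supp}(\tilde B)\cap T_{\mathrm{tors}}$ is valid since the resulting bound $\sum_{\chi\in\operatorname{Supp}(\tilde B)\cap T_{\mathrm{tors}}}\dim B_\chi$ is independent of $M$, and the key geometric step holds: if $B\otimes_{\bar\q}B$ is finitely generated over $\bar\q Q$ via comultiplication, then by Cayley--Hamilton $\bar\q[Q\times Q]/\operatorname{Ann}(B\otimes B)$ is integral and of finite type, hence finite, over $\bar\q Q$, so $m$ is finite on $\operatorname{Supp}(\tilde B)\times\operatorname{Supp}(\tilde B)$ and no positive-dimensional subtorus coset can sit inside $\operatorname{Supp}(\tilde B)$; the toric Manin--Mumford/Laurent theorem then gives finiteness. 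Two small corrections. First, when $Q$ has torsion, $\operatorname{Spec}(\bar\q Q)$ is a finite disjoint union of torsion translates of a torus, so Laurent's theorem must be applied componentwise after translating by a torsion point --- exactly the ``care'' you flag, and it causes no real difficulty. Second, in the alternative route the monotonicity is stated backwards: for a quotient $B'$ of $B$ one has $\Sigma_B(Q)\se\Sigma_{B'}(Q)$ (finite generation over $RQ_v$ passes to quotients), which is nonetheless the containment you need to transfer $[\pm v]\notin\Sigma_{B'}(Q)$ back to $B$. Modulo these points your argument is sound, and it reconstructs the same circle of ideas (characteristic variety, tameness, torsion points on subvarieties of tori) that underlies the cited proof; the only genuinely nontrivial external input is the torsion-point theorem, which is legitimate to invoke.
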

\begin{proof}
See Theorem 3.1 in \cite{bridson-kochloukova2013}.
\end{proof}

\begin{thm}[Kochloukova-Mokari]\label{d-f2}
Let $Q$ be a finitely generated abelian group and $B$ 
a finitely generated $\q Q$-module. If
$\sup_{m \geq 1} \dim_{\q} (B \otimes_{\q Q^m} \q) < \infty$,
then for any $i \geq 0$,
\[
\sup_{m \geq 1} \dim_{\q} H_i(Q^m, B)<\infty.
\]
\end{thm}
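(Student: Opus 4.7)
The plan is to induct on the rank $n$ of the free part of the finitely generated abelian group $Q$. Since the torsion subgroup $T \le Q$ is finite and contributes trivially to $\q$-coefficient homology in positive degrees, the Lyndon-Hochschild-Serre spectral sequence for $(Q^m \cap T) \hookrightarrow Q^m \twoheadrightarrow Q^m/(Q^m \cap T)$ degenerates, reducing the problem to $Q = \Z^n$.

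For the base case $n=1$, the ring $\q Q = \q[t, t^{-1}]$ is a PID. Since $\dim_\q \q[t, t^{-1}]/(t^m - 1) = m$ is unbounded in $m$, the hypothesis forces $B$ to have no free summand in the PID structure theorem; hence $B$ decomposes as a finite direct sum of torsion modules $\q[t,t^{-1}]/(f_j)$, each finite-dimensional over $\q$, making $B$ itself finite-dimensional. Then both $H_0(m\Z, B) = B/(t^m - 1)B$ and $H_1(m\Z, B) = \ker(t^m - 1 \colon B \to B)$ are uniformly bounded by $\dim_\q B$.

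For the inductive step $n \ge 2$, decompose $Q = Q' \times \langle t_n \rangle$ with $Q' = \Z^{n-1}$, so $Q^m = (Q')^m \times m\Z$. Apply the Lyndon-Hochschild-Serre spectral sequence
\[
E^2_{p,q} = H_p\bigl((Q')^m, H_q(m\Z, B)\bigr) \Rightarrow H_{p+q}(Q^m, B);
\]
only $q=0,1$ contribute since $\Z$ has cohomological dimension one. For $q=0$, the $\q Q'$-module $C_m := B/(t_n^m - 1)B$ satisfies $\dim_\q (C_m)_{(Q')^{m'}} = \dim_\q B_{(Q')^{m'} \cdot \langle t_n^m \rangle} \le \sup_{M \in \A_Q} \dim_\q B_M$, and the latter supremum is bounded by hypothesis (the family $\{Q^\ell\}_{\ell \ge 1}$ being cofinal in the finite-index subgroup lattice of $Q$). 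A uniform strengthening of the induction hypothesis---where the bound on $\sup_{m'} \dim_\q H_p((Q')^{m'}, A)$ depends only on $\sup_{m'} \dim_\q A_{(Q')^{m'}}$---then yields a uniform bound on $\dim_\q E^2_{p,0}$ in $m$.

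The hard part will be the $q=1$ term $E^2_{p,1} = H_p((Q')^m, D_m)$ with $D_m := B^{m\Z} = \ker(t_n^m - 1 \colon B \to B)$. Using the exact sequence of $\q Q'$-modules
\[
0 \to D_m \to B \xrightarrow{t_n^m - 1} B \to C_m \to 0
\]
together with the long exact sequences of the derived functors of $(-)_{(Q')^{m'}}$, one relates the coinvariants and higher homologies of $D_m$ to those of $B$ and $C_m$. Since $B$ itself need not be finitely generated over $\q Q'$, the induction cannot be applied to $B$ directly; the delicate step is to extract, from the uniform bound on $\dim_\q (C_m)_{(Q')^{m'}}$ combined with the Noetherian structure of $\q Q$, a corresponding uniform bound on $\dim_\q (D_m)_{(Q')^{m'}}$, and thence on $\dim_\q E^2_{p,1}$.
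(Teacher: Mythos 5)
First, a point of comparison: the paper does not actually prove this statement --- it is imported verbatim from \cite{kochloukova-mokari2014} and the ``proof'' is a one-line citation to Theorem~2.4 there --- so the only question is whether your argument stands on its own. It does not yet: it is a strategy outline with two gaps that you flag but do not close. The reduction to $Q=\Z^n$ and the base case $n=1$ are fine (over the PID $\q[t,t^{-1}]$ the boundedness hypothesis kills any free summand, and for a finite-dimensional module the kernel and cokernel of $t^m-1$ have equal dimension). The trouble is the inductive step. Your induction is on the rank of $Q$, but the modules you feed into the rank-$(n-1)$ case, namely $C_m=B/(t_n^m-1)B$ and $D_m=\ker(t_n^m-1)$, form a family indexed by $m$, and you need the resulting bounds on $\dim_\q H_p((Q')^m,C_m)$ to be uniform in $m$. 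The statement you are inducting on is a qualitative finiteness assertion for a single fixed module: applied to each $C_m$ it gives a finite supremum over $m'$ for each fixed $m$, but nothing uniform in $m$. The ``uniform strengthening'' you invoke --- a bound depending only on $\sup_{m'}\dim_\q (A)_{(Q')^{m'}}$ --- is a genuinely stronger theorem that would itself need proof, and the obvious repair (letting the bound also depend on the number of generators of $A$ over $\q Q'$) fails here, because $C_m$ requires on the order of $m$ generators over $\q Q'$ (as $\q Q/(t_n^m-1)$ is free of rank $m$ over $\q Q'$).

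Second, the $q=1$ column is not an argument at all: you correctly identify $D_m=B^{\langle t_n^m\rangle}$ as ``the hard part'' and then describe what one would like to extract without extracting it. The four-term sequence you write relates $(D_m)_{(Q')^{m'}}$ to $B_{(Q')^{m'}}$ and to $\tor_1^{\q (Q')^{m'}}$-terms, but $B_{(Q')^{m'}}$ (coinvariants under $(Q')^{m'}$ alone, not under all of $Q^m$) is in general infinite-dimensional, so this sequence does not by itself bound $\dim_\q (D_m)_{(Q')^{m'}}$; and even if it did, you would again need the uniform induction hypothesis to pass to $H_p((Q')^m,D_m)$. In short, the architecture (rank induction plus the Lyndon--Hochschild--Serre spectral sequence) could conceivably be made to work, but only after upgrading the statement being proved to a quantitative one, and that upgrade is the actual content of the theorem. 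A more transparent starting point is the Koszul complex of $t_1^m-1,\dots,t_n^m-1$ acting on $B$, which computes $H_*(Q^m,B)$ directly; the real difficulty, which your sketch does not resolve, is passing from a uniform bound on $H_0$ to uniform bounds on the higher Koszul homology.
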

\begin{proof}
See Theorem 2.4 in \cite{kochloukova-mokari2014}.
\end{proof}

\begin{lem}\label{subquotient}
Let $Q$ be a finitely generated abelian group. Let $V$ 
be a $\q Q$-module such that 
$\bigotimes_\q^n V$ is a finitely generated $\q Q$-module 
via the diagonal action of $Q$. 
If $\supdim \Big((\bigotimes_\q^n V)\otimes_{\q Q^m}\q\Big) < \infty$,
then for any  $\q Q$-subquotient  $U$ of 
$\bigotimes_\q^n V$, we have
\[
\supdim (U \otimes_{\q Q^m}\q) < \infty.
\]
\end{lem}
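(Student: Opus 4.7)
The plan is to reduce the claim to an application of Theorem \ref{d-f2} on a suitable quotient of $T := \bigotimes_\q^n V$. Write the subquotient as $U = U_1/U_2$ where $U_2 \se U_1 \se T$ are $\q Q$-submodules. Since $Q$ is a finitely generated abelian group, the ring $\q Q$ is Noetherian, so $U_1$, $U_2$, $T/U_1$ and $T/U_2$ are all finitely generated $\q Q$-modules. Our goal is to bound $\dim_\q H_0(Q^m, U)$ uniformly in $m$.

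The key auxiliary object is $T/U_1$. First I would observe that the coinvariants functor $(-)_{Q^m}$ is right exact, so the surjection $T\two T/U_1$ induces a surjection $T_{Q^m}\two (T/U_1)_{Q^m}$. Thus
\[
\supdim \bigl((T/U_1)\otimes_{\q Q^m}\q\bigr) \;\le\; \supdim \bigl(T\otimes_{\q Q^m}\q\bigr) \;<\;\infty,
\]
by the hypothesis. Since $T/U_1$ is also a finitely generated $\q Q$-module, Theorem \ref{d-f2} applies to $B := T/U_1$ and yields $\supdim H_i(Q^m, T/U_1)<\infty$ for every $i\ge 0$; in particular for $i=1$. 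The same argument shows $\supdim H_0(Q^m, T/U_2)<\infty$, as it is a quotient of $T_{Q^m}$.

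Now I would exploit the short exact sequence $0\arr U \arr T/U_2 \arr T/U_1 \arr 0$, whose associated long exact sequence in $Q^m$-homology contains the segment
\[
H_1(Q^m, T/U_1) \arr H_0(Q^m, U) \arr H_0(Q^m, T/U_2) \arr H_0(Q^m, T/U_1)\arr 0.
\]
This sandwiches $H_0(Q^m, U)=U\otimes_{\q Q^m}\q$ between the two bounded quantities from the previous paragraph, giving
\[
\dim_\q H_0(Q^m, U) \;\le\; \dim_\q H_1(Q^m, T/U_1) + \dim_\q H_0(Q^m, T/U_2),
\]
and taking $\sup_{m\ge 1}$ finishes the argument.

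There is no essential obstacle; the entire weight of the lemma is carried by Theorem \ref{d-f2}. The only subtlety worth highlighting is verifying that the auxiliary quotient $T/U_1$ satisfies the hypotheses of that theorem, which is why one must invoke right exactness of coinvariants together with Noetherianity of $\q Q$ at the outset. Passing from the subquotient $U=U_1/U_2$ to the quotient $T/U_2$ (and then peeling off the submodule $U$ via the long exact sequence) is what allows us to avoid having to control $H_i(Q^m, U_1)$ directly, which is the step one would otherwise get stuck on because coinvariants are not left exact.
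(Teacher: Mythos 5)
Your argument is correct and is essentially the paper's own proof: both embed the subquotient $U$ into the quotient $T/U_2$, use the short exact sequence $0\arr U\arr T/U_2\arr T/U_1\arr 0$, and bound $\dim_\q(U\otimes_{\q Q^m}\q)$ by $\dim_\q H_1(Q^m,T/U_1)+\dim_\q((T/U_2)\otimes_{\q Q^m}\q)$, with Theorem \ref{d-f2} supplying the $H_1$ bound. The appeal to Noetherianity of $\q Q$ is not actually needed, since $T/U_1$ is finitely generated simply as a quotient of the finitely generated module $T$, but this does not affect the validity of the proof.
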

\begin{proof}
First let us assume that $U$ is a quotient of $\bigotimes_\q^n V$, i.e. $U=(\bigotimes_\q^n V)/T$, 
for some $\q Q$-submodule $T$ of $\bigotimes_\q^n V$. Then clearly 
\[
\begin{array}{c}
\dim_\q (U \otimes_{\q Q^m}\q) \leq \dim_\q\Big((\bigotimes_\q^n V)\otimes_{\q Q^m}\q\Big)
\end{array}
\]
and thus
\[
\supdim (U \otimes_{\q Q^m}\q) \leq  \supdim 
\begin{array}{c}
\Big((\bigotimes_\q^n V)\otimes_{\q Q^m}\q\Big) < \infty.
\end{array}
\]
Next let $U$ be a $\q Q$-submodule of some $W:=(\bigotimes_\q^n V)/T$. Then $W/U$ is 
of the form $(\bigotimes_\q^n V)/T'$ for some $\q Q$-submodule $T'$ of $\bigotimes_\q^n V$ and so
\[
\supdim (W \otimes_{\q Q^m}\q)<\infty, \ \ \ 
\supdim \Big((W/U) \otimes_{\q Q^m}\q\Big)<\infty.
\] 
Now from the exact sequence $0\arr U \arr W \arr W/U\arr 0$, 
we obtain the long exact sequence 
\[
\cdots \arr \tor_1^{\q Q^m}(W/U,\q) \arr  U 
\otimes_{\q Q^m}\q  \arr W \otimes_{\q Q^m}\q 
\arr (W/U) \otimes_{\q Q^m}\q \arr 0,
\] 
which implies that
\begin{equation}\label{2}
\dim_\q (U \otimes_{\q Q^m}\q) \leq \dim_\q 
\tor_1^{\q Q^m}(W/U,\q)+\dim_\q (W \otimes_{\q Q^m}\q).
\end{equation}
Since $\supdim \Big((W/U) \otimes_{\q Q^m}\q)<\infty$, by Theorem \ref{d-f2} we obtain 
\begin{equation}\label{3}
\supdim H_i(Q^m,W/U)<\infty.
\end{equation}
But $\tor_i^{\q Q^m}(W/U,\q)=H_i(Q^m,W/U)$,
thus by (\ref{2}) and (\ref{3}) we have
\[
\supdim (U \otimes_{\q Q^m}\q)<\infty.
\]
\end{proof}

The next theorem is the main result of this paper.

\begin{thm}\label{d-f4}
Let $N \tail G \two Q$ be an exact sequence of groups, where $G$ is finitely generated, 
$N$ is nilpotent of class $c$ and $Q$ is abelian. If $N/N'$ is $2(c(n-1)+1)$-tame, 
then for any $ 0 \leq j \leq n$, $\vb_j(G)$ is finite.
\end{thm}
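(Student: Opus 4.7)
The plan is to bound $\dim_\q H_j(M,\q)$ uniformly over $M\in\A_G$ by analyzing the Lyndon--Hochschild--Serre spectral sequence of the extension $N_M\tail M\two Q_M$, where $N_M:=N\cap M$ and $Q_M:=M/N_M$ is a finite-index subgroup of $Q$. Since
\[
E^2_{p,q}(M)=H_p(Q_M,H_q(N_M,\q))\ \Longrightarrow\ H_{p+q}(M,\q),
\]
it suffices to bound $\dim_\q H_p(Q_M,H_q(N_M,\q))$ uniformly in $M$ for $p+q\leq n$. I would then apply Corollary~\ref{nil-filtration-2} to $N_M\tail M\two Q_M$: the filtration of $H_q(N_M,\q)$ it produces has length bounded by a function of $c$ and $q$ alone (by the inductive construction in the proof of Theorem~\ref{nil-filtration}), and its successive $\q Q_M$-quotients are subquotients of $\bigotimes^s_\q V_M$ with $s\leq c(q-1)+1\leq c(n-1)+1$, where $V_M:=(N_M/N_M')\otimes_\Z\q$. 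An iterated long-exact-sequence argument then reduces the task to bounding $\dim_\q H_p(Q_M,U)$ uniformly in $M$, for $U$ any $\q Q_M$-subquotient of some $\bigotimes^s_\q V_M$ with $s\leq c(n-1)+1$ and $p\leq n$.

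Two structural observations link this back to the tameness hypothesis. First, since $N$ is finitely generated nilpotent and $N_M$ has finite index in $N$, the natural map $N_M/N_M'\to N/N'$ has torsion kernel $(N_M\cap N')/N_M'$ and finite cokernel, giving a $\q Q_M$-equivariant isomorphism $V_M\cong V|_{Q_M}$ where $V|_{Q_M}$ denotes $V$ restricted along $Q_M\hookrightarrow Q$. Second, $m$-tameness passes from $V$ over $\q Q$ to $V|_{Q_M}$ over $\q Q_M$, because restriction of valuations induces a natural bijection $S(Q)\cong S(Q_M)$ (as $Q/Q_M$ is finite and $\R$ is divisible) under which $\Sigma_V(Q)$ corresponds to $\Sigma_{V|_{Q_M}}(Q_M)$. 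Hence $\bigotimes^{2s}_\q V$ is finitely generated over $\q Q_M$ for all $s\leq c(n-1)+1$ by Theorem~\ref{b-g}, and applying Theorem~\ref{b-k2013} over $\q Q$ provides a constant $C=C(s)$ with $\sup_{R\in\A_Q}\dim_\q\bigl(\bigotimes^s_\q V\otimes_{\q R}\q\bigr)\leq C$, which dominates the analogous supremum over $\A_{Q_M}\subseteq\A_Q$.

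The main obstacle I expect is the uniformity of the bound on $\dim_\q H_p(Q_M,U)$ as $U$ varies with $M$: the subquotients appearing in the filtration of $H_q(N_M,\q)$ are not, a priori, restrictions of fixed $\q Q$-subquotients of $\bigotimes^s_\q V$, because the spectral-sequence differentials producing them are governed by the cohomology class classifying $N_M$ and depend genuinely on $M$. My plan to bypass this is to apply Lemma~\ref{subquotient} and Theorem~\ref{d-f2} with $Q$ replaced by $Q_M$ to the $\q Q_M$-subquotient $U$ of $(\bigotimes^s_\q V)|_{Q_M}$, and to trace through those proofs to check that the dimension bounds produced depend only on the ambient module $\bigotimes^s_\q V$ and the uniform constant $C$, rather than on the particular $U$. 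A final application of Theorem~\ref{d-f2} then converts the uniform bound on coinvariants into a uniform bound on $\dim_\q H_p(Q_M,U)$, which, summed over the bounded number of filtration pieces and $(p,q)$-positions with $p+q\leq n$, yields the finiteness of $\vb_j(G)$ for $0\leq j\leq n$.
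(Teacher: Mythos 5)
Your overall architecture matches the paper's: the Lyndon--Hochschild--Serre spectral sequence for $N_M\tail M\two Q_M$, the filtration of Corollary \ref{nil-filtration-2}, tameness via Theorem \ref{b-g}, and the chain Theorem \ref{b-k2013}, Lemma \ref{subquotient}, Theorem \ref{d-f2}. But there is a genuine gap at exactly the point you flag as ``the main obstacle,'' and your proposed bypass does not close it. The difficulty is real: applying Corollary \ref{nil-filtration-2} to the varying extension $N_M\tail M\two Q_M$ produces a filtration of $H_q(N_M,\q)$ whose subquotients of $\bigotimes^s_\q V_M$ genuinely depend on $M$, and Lemma \ref{subquotient} and Theorem \ref{d-f2} do not yield a bound depending only on the ambient module. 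The estimate in Lemma \ref{subquotient} passes through $\dim_\q\tor_1^{\q Q^m}(W/U,\q)$, which is controlled by applying Theorem \ref{d-f2} to the particular quotient $W/U$; the resulting constant depends on $W/U$ (essentially on a presentation of it), not merely on $\bigotimes^s_\q V$. ``Tracing through the proofs'' therefore cannot produce a single constant uniform over all subquotients arising from all $M$.

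The missing idea is Corollary \ref{nil-coristriction} (or Corollary \ref{nil-transfer}): since $N_M=N\cap M$ has finite index in the nilpotent group $N$ and the coefficients are $\q$, the corestriction $H_q(N_M,\q)\arr H_q(N,\q)$ is an isomorphism for every $q$. This is applied at the outset, before any filtration is taken, and it replaces the coefficient module of the spectral sequence by the \emph{fixed} $\q Q$-module $H_q(N,\q)$. One then applies Corollary \ref{nil-filtration-2} once, to the fixed extension $N\tail G\two Q$, obtaining a single filtration with fixed $\q Q$-subquotients of $\bigotimes^s_\q V$ with $s\leq c(q-1)+1$; the only remaining variation is in $Q_M$, which is handled by choosing $m$ with $Q^m\se Q_M$, using $H_p(Q_M,-)\simeq H_p(Q^m,-)_{Q_M/Q^m}$ to reduce to a supremum over the subgroups $Q^m$, and then invoking Theorem \ref{b-k2013}, Lemma \ref{subquotient} and Theorem \ref{d-f2} for each of the finitely many fixed subquotients. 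With this reduction your two ``structural observations'' about $V_M$ versus $V$ become unnecessary; note in any case that your justification of the first one assumes $N$ is finitely generated nilpotent, which is not among the hypotheses (only $N/N'$ is finitely generated as a $Q$-module). Without the corestriction isomorphism, or some substitute making the coefficient modules independent of $M$, the argument as proposed does not go through.
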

\begin{proof}
Let $G_1$ be a subgroup of finite index in $G$. Let $Q_1$ be the image of $G_1$ in $Q$ and
$N_1:=N\cap G_1$. Then clearly $[Q:Q_1]< \infty$, and $[N:N_1]< \infty$. From the associated 
Lyndon-Hochschild-Serre spectral sequence
\[
E^2_{p, q}=H_p(Q_1,H_q(N_1,\q))\Rightarrow H_{p+q}(G_1,\q)
\]
of the extension $ N_1 \tail G_1 \two Q_1$, we obtain
\[
\dim_{\q} H_j(G_1,\q)=\sum_{p=0}^j \dim_{\q}E_{p,j-p}^{\infty}\leq
\sum_{p=0}^j \dim_{\q}E_{p,j-p}^2.
\]
Since  $[N:N_1]< \infty$, by Corollary \ref{nil-coristriction}, for any $k \geq 0$, we have
\[
H_k(N_1,\q)\simeq H_k(N,\q).
\]
Thus $E^2_{p,q}\simeq H_p(Q_1,H_q(N,\q))$. On the other hand, since $[Q:Q_1]< \infty$, there
exists $m \in \N$ such that $(Q/Q_1)^m=1$. Hence $Q^m \subseteq Q_1$. Since $Q_1 / Q^m$ 
is finite, we have
$$
H_p(Q_1,H_{j-p}(N,\q)) \simeq H_p(Q^m,H_{j-p}(N,\q))_{Q_1/ Q^m}
$$
and this implies that
\[
\dim_\q H_p(Q_1,H_{j-p}(N,\q))\leq \dim_\q H_p(Q^m,H_{j-p}(N,\q)).
\]
So to prove the theorem it is sufficient to prove that
\[
\supdim H_p(Q^m, H_{j-p}(N, \q))< \infty.
\]
By Corollary \ref{nil-filtration-2}, $H_{j-p}(N,\q)$ has a natural 
filtration of $\q Q$-submodules
\[
0=E_0 \se E_1 \se \cdots \se E_{l-1} \se E_l= H_{j-p}(N,\q),
\]
such that for any $0\leq k \leq l,$ $E_k/E_{k-1}$ is a natural
subquotient of a $\q Q$-module from the set 
$\{\bigotimes^s_\q V\}_{0 \leq s \leq c(j-p-1)+1}$, where  $V:=(N/N')\otimes_\Z \q$ and  
$\bigotimes^s_\q V$ is considered as a $\q Q$-module via the diagonal action of $Q$. By 
Theorem \ref{b-g}, $\bigotimes^s_\q V$ is a finitely generated $\q Q$-module for 
$0 \leq s \leq 2c(j-p-1)+2$. Thus by Theorem \ref{b-k2013}, 
\[
\begin{array}{c}
\sup_{m \geq 1}\dim_\q \Big((\bigotimes^s_\q V)\otimes_{\q Q^m}\q\Big) <
\infty\ {\rm  for} \ 0 \leq s \leq c(j-p-1)+1.
\end{array}
\]
Next Lemma \ref{subquotient} implies that
\[
\supdim \Big((E_i/E_{i-1}) \otimes_{\q Q^m}\q\Big) < \infty,
\]
and by induction on $i$, one can show that, for any $1 \leq i \leq j-p$
\[
\supdim (E_i \otimes_{\q Q^m}\q) < \infty.
\]
Therefore
\[
\supdim \Big(H_{j-p}(N,\q) \otimes_{\q Q^m}\q\Big)=\supdim (E_l \otimes_{\q Q^m}\q) < \infty.
\]
Now by Theorem \ref{d-f2}, for any $0 \leq p\leq j$,
\[
\supdim H_p(Q^m, H_{j-p}(N, \q))< \infty.
\]
 This completes the proof of the theorem.
\end{proof}

\begin{lem}\label{5}
Let $G$ be a group and $H$ a subgroup of finite index in $G$. Then $\vb_n(G)$ is finite 
if and only if $\vb_n(H)$ is finite. In fact, for any $n\geq 0$, $\vb_n(G)=\vb_n(H)$.
\end{lem}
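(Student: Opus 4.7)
The plan is to prove the two inequalities $\vb_n(H) \leq \vb_n(G)$ and $\vb_n(G) \leq \vb_n(H)$ separately. The first is an easy bookkeeping argument about indices, while the second requires the standard fact that over $\q$ the corestriction map associated to a finite index inclusion is surjective.

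First I would establish $\vb_n(H)\leq \vb_n(G)$. If $M \in \A_H$, then $[G:M] = [G:H]\cdot [H:M] < \infty$, so $M \in \A_G$; hence $\A_H \se \A_G$ and the inequality follows by taking suprema.

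For the reverse inequality, given $M \in \A_G$, I would set $K := M \cap H$. Since $H$ is of finite index in $G$, we have $[M:K] \leq [G:H] < \infty$, and also $[H:K] \leq [G:M] < \infty$, so $K \in \A_M \cap \A_H$. The crucial ingredient is that for a finite index inclusion $K \leq M$, the composition
\[
H_n(M,\q) \overset{\ress_K^M}{\larr} H_n(K,\q) \overset{\corr_K^M}{\larr} H_n(M,\q)
\]
coincides with multiplication by $[M:K]$ (see \cite[Chap.~III, Proposition~9.5]{brown1994}). Since $[M:K]$ is invertible in $\q$, this composition is an isomorphism, forcing $\corr_K^M: H_n(K,\q) \to H_n(M,\q)$ to be surjective. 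Therefore
\[
\dim_\q H_n(M,\q) \leq \dim_\q H_n(K,\q) \leq \vb_n(H).
\]
Taking the supremum over $M \in \A_G$ yields $\vb_n(G) \leq \vb_n(H)$.

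Combining the two inequalities gives $\vb_n(G)=\vb_n(H)$, and in particular one of them is finite if and only if the other is. There is no serious obstacle here; the only nontrivial input is the well-known surjectivity of corestriction over $\q$ for finite index subgroups, which is what makes the passage from an arbitrary $M\in\A_G$ to the subgroup $M\cap H\in\A_H$ work without loss of information on the dimension of rational homology.
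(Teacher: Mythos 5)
Your proof is correct, and the first inequality $\vb_n(H)\leq\vb_n(G)$ is exactly the paper's argument. For the reverse inequality you and the paper take slightly different (though equivalent) routes. You apply the transfer argument directly to $K=M\cap H$: the composite of the homological transfer with $\corr_K^M$ is multiplication by $[M:K]$, which is invertible over $\q$, so $\corr_K^M$ is surjective and $\dim_\q H_n(M,\q)\leq\dim_\q H_n(K,\q)$. The paper instead first replaces $M\cap H$ by a normal subgroup $N\unlhd M$ of finite index contained in it (the normal core) and then uses the coinvariants isomorphism $H_n(M,\q)\simeq H_n(N,\q)_{M/N}$, which gives $\dim_\q H_n(M,\q)\leq\dim_\q H_n(N,\q)\leq\vb_n(H)$ directly. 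Both arguments rest on the same underlying fact, namely that the index is invertible in $\q$; yours avoids passing to the normal core at the cost of invoking the transfer explicitly, while the paper's avoids the transfer at the cost of requiring normality. One small notational remark: the map $H_n(M,\q)\arr H_n(K,\q)$ that you denote $\ress_K^M$ is the homological transfer; the paper reserves $\ress$ for cohomological restriction, so it would be cleaner to write ${\rm tr}$, matching the notation of the proposition of Brown that you cite.
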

\begin{proof}
If $H_0$ is a subgroup of finite index in $H$, then $[G:H_0]=[G:H][H:H_0]<\infty$. So
$\dim_{\q}H_n(H_0,\q)\leq \vb_n(G)$ and hence  
\[
\vb_n(H)\leq \vb_n(G).
\]
If $G_0$ is a subgroup of finite index in $G$, then $[G_0:G_0\cap H]\leq[G:H]$. 
So there is a normal subgroup $N$ of $G_0$ such that $N\se G_0\cap H$ and 
$[G_0:N]<\infty$. Since $H_n(G_0,\q)\simeq H_n(N,\q)_{G_0/N}$,
$\dim_\q H_n(G_0,\q)\leq \dim_\q H_n(N,\q)$. Now from $[H:N]<\infty$, it follows 
that $\dim_\q H_n(G_0,\q) \leq \dim_\q H_n(N,\q)\leq \vb_n(H)$. Therefore 
\[
\vb_n(G)\leq \vb_n(H).
\]
\end{proof}

\begin{cor}\label{d-f5}
Let $G$ be a nilpotent-by-abelian-by-finite group, i.e. we have a chain of subgroups 
$N \unlhd H \unlhd G$, where $N$ is nilpotent, $H/N$ is abelian and $[G:H]<\infty$. If $N$
is of class $c$ and $H/N'$ is of type $\FP_{2c(n-1)+2}$, then $\vb_j(G)$ is finite for 
any $0\leq j\leq n$.
\end{cor}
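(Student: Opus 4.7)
The plan is to use Lemma~\ref{5} to reduce from $G$ to its finite-index normal subgroup $H$, and then to invoke Theorem~\ref{d-f4} on the extension $N\tail H\two H/N$. The real work is entirely in verifying the tameness hypothesis of Theorem~\ref{d-f4}; once that is in place, the two previously-cited theorems do everything.

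First I would apply Lemma~\ref{5} with $[G:H]<\infty$ to obtain $\vb_j(G)=\vb_j(H)$ for every $j\ge 0$, so it suffices to bound $\vb_j(H)$ for $0\le j\le n$. To apply Theorem~\ref{d-f4} to the extension $N\tail H\two H/N$, I must confirm that $N$ is nilpotent of class $c$ (given), that $H/N$ is abelian (given), and, crucially, that $(N/N')\otimes_\Z\q$ is $2(c(n-1)+1)$-tame as a $\q(H/N)$-module. Finite generation of $H$, needed to apply Theorem~\ref{d-f4}, follows from the $\FP_1$ part of the hypothesis on $H/N'$ together with the nilpotency of $N$.

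The key step is the tameness verification. I would consider the short exact sequence of groups
\[
N/N'\tail H/N'\two H/N,
\]
in which the kernel $N/N'$ and the quotient $H/N$ are both abelian. The hypothesis that $H/N'$ is of type $\FP_{2c(n-1)+2}$ places us directly in the scope of Theorem~\ref{b-g2} with $m=2c(n-1)+2$ and $K=\q$; it yields that $(N/N')\otimes_\Z\q$ is $m$-tame as a $\q(H/N)$-module. Since $2c(n-1)+2=2(c(n-1)+1)$, this is exactly the tameness degree demanded by Theorem~\ref{d-f4}.

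Applying Theorem~\ref{d-f4} now gives $\vb_j(H)<\infty$ for $0\le j\le n$, and combined with the opening reduction this yields $\vb_j(G)<\infty$ in the same range. I do not expect a genuine obstacle here: the argument is purely a matching of hypotheses, and the exponent in the $\FP$-condition is chosen precisely so that Theorem~\ref{b-g2} delivers exactly the tameness degree that Theorem~\ref{d-f4} requires.
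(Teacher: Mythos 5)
Your argument is correct and is essentially identical to the paper's own proof: both reduce from $G$ to $H$ via Lemma~\ref{5}, obtain $2(c(n-1)+1)$-tameness of $(N/N')\otimes_\Z\q$ from the $\FP_{2c(n-1)+2}$ hypothesis on the metabelian group $H/N'$ via Theorem~\ref{b-g2}, and then apply Theorem~\ref{d-f4}. Your explicit remark on why $H$ is finitely generated is a small point the paper leaves implicit, but it does not change the route.
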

\begin{proof}
Since $H/N'$ is metabelian of type $\FP_{2c(j-p-1)+2}$, by Theorem~\ref{b-g2} the $Q$-module
$(N/N')\otimes_\Z\q$ is $2(c(j-p-1)+1)$-tame. Now the claim follows from Lemma~\ref{5} and 
Theorem~\ref{d-f4}.
\end{proof}

\begin{rem}
Theorem \ref{d-f4} and Corollary \ref{d-f5} generalize
\cite[Theorem 5.3 and Corollary 5.4]{bridson-kochloukova2013}
to higher homology groups.
\end{rem}

For the first virtual rational Betti number we can improve the above result a bit.

\begin{prp}\label{betti1}
Let $N \tail G \two Q$ be an exact sequence of groups, where 
$N$ is nilpotent and  $Q$ is polycyclic. Let $G/N'$ be of type $\FP_3$ and let
$\bigotimes_\Z^2 N/N'$ be finitely generated 
as $\Z Q$-module via the diagonal action. Then $\vb_1(G)$ is finite. 
\end{prp}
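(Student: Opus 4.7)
The plan is to imitate the proof of Theorem~\ref{d-f4} in the special case $j=1$, combining the Lyndon--Hochschild--Serre spectral sequence for $N_1\tail G_1\two Q_1$ with an application of \cite[Theorem~A]{kochloukova-mokari2014} to the abelian-by-polycyclic quotient $\bar G := G/N'$. Given $G_1\in\A_G$, I set $N_1:=N\cap G_1$ and $Q_1:=G_1/N_1$, viewed as a finite-index subgroup of $Q$. The usual two-term estimate from the spectral sequence yields
\[
\dim_\q H_1(G_1,\q)\leq \dim_\q H_1(Q_1,\q)+\dim_\q H_1(N_1,\q)_{Q_1}.
\]
By Corollary~\ref{nil-transfer}, applied to the nilpotent group $N$ with finite-index subgroup $N_1$ (over $R=\q$), the corestriction $H_1(N_1,\q)\to H_1(N,\q)=V$, where $V:=(N/N')\otimes_\Z\q$, is an isomorphism; since $N$ is normal in $G$ and $g\in G_1$ fixes $G_1$, conjugation by such $g$ preserves both $N$ and $N_1$, so naturality of corestriction makes this isomorphism $Q_1$-equivariant.

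The first summand is bounded by $\vb_1(Q)$, which is finite for polycyclic $Q$. This is a standard consequence of Mostow--Hattori's computation of the rational cohomology of a solvmanifold in terms of the associated Lie algebra, which forces the rational Betti numbers of a polycyclic group to be commensurability invariants. To bound the second summand $\dim_\q V_{Q_1}$, I apply \cite[Theorem~A]{kochloukova-mokari2014} to $\bar G=G/N'$: this group is abelian-by-polycyclic with abelian normal subgroup $\bar N:=N/N'$ and quotient $Q$, it is of type $\FP_3$ by hypothesis, and $\bigotimes_\q^2 V$ is finitely generated as a $\q Q$-module via the diagonal action (since $\bigotimes_\Z^2 N/N'$ is finitely generated as $\Z Q$-module by hypothesis). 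Hence $\vb_1(\bar G)<\infty$.

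Setting $\pi:\bar G\two Q$ and $H_1:=\pi^{-1}(Q_1)$, the finite-index subgroup $H_1\leq \bar G$ contains $\bar N$ and fits in the extension $\bar N\tail H_1\two Q_1$, whose five-term exact sequence reads
\[
H_2(Q_1,\q)\arr V_{Q_1}\arr H_1(H_1,\q)\arr H_1(Q_1,\q)\arr 0.
\]
This forces $\dim_\q V_{Q_1}\leq \dim_\q H_1(H_1,\q)+\dim_\q H_2(Q_1,\q)\leq \vb_1(\bar G)+\vb_2(Q)$, which is finite. Combining the two estimates gives a uniform bound on $\dim_\q H_1(G_1,\q)$ over $G_1\in\A_G$, proving $\vb_1(G)<\infty$. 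The main obstacle is the input $\vb_n(Q)<\infty$ for polycyclic $Q$, needed here in degrees $n=1$ and $n=2$: this is not formally developed in the paper and requires either a citation to Mostow--Hattori theory or a direct argument using that every polycyclic group has a finite-index subgroup which is a cocompact lattice in a simply connected solvable Lie group. A secondary but routine technical point is the $Q_1$-equivariance of the nilpotent transfer isomorphism used above.
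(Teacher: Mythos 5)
Your proof is correct, and for the key step it takes a genuinely different (and arguably cleaner) route than the paper. Both arguments open identically: the two-term spectral sequence estimate, the identification $H_1(N_1,\q)_{Q_1}\simeq V_{Q_1}$ via Corollary~\ref{nil-transfer}, and a uniform bound on $\dim_\q H_1(Q_1,\q)$ for finite-index $Q_1\leq Q$. For that last bound you do not need Mostow--Hattori: the paper simply cites \cite[Lemma~3.2]{kochloukova-mokari2014} (reproduced in more general form as the final Proposition of the paper), which gives $\dim_\q H_j(Q_1,\q)\leq\binom{h(Q)}{j}$ for any group of finite torsion-free rank; this also supplies the bound on $\dim_\q H_2(Q_1,\q)$ that your argument needs, so the ``main obstacle'' you flag is a non-issue. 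The real divergence is in bounding $\dim_\q V_{Q_1}$. The paper works inside $H=G/N'$ with $A_0=[A,H]$ and $Q_0=H/A_0$, \emph{follows the proof} of Theorem~A of \cite{kochloukova-mokari2014} to extract $\sup\dim_\q(A_0\otimes_{Q_2}\q)<\infty$, and then pieces together the bound on $A\otimes_{Q_1}\q$ from the extension $A_0\tail A\two A/A_0$ and the finite generation of $A/A_0$. You instead invoke Theorem~A as a black box to get $\vb_1(G/N')<\infty$ and then run the five-term exact sequence of $\bar N\tail \pi^{-1}(Q_1)\two Q_1$ backwards to recover $\dim_\q V_{Q_1}\leq \vb_1(G/N')+\dim_\q H_2(Q_1,\q)$. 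Your version buys modularity (only the statement of Theorem~A is used, not its proof) at the price of the extra $H_2(Q_1,\q)$ term, which is harmless; the paper's version avoids that term but relies on an internal step of someone else's proof. The remaining points you flag as technical (equivariance of the corestriction, hence $H_1(N_1,\q)_{Q_1}\simeq V_{Q_1}$) are indeed routine and are used implicitly in the paper as well.
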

\begin{proof}
Let $G_1$ be a normal subgroup of finite index in $G$. Let $Q_1$ be the image of the $G_1$ in
$Q$ and $N_1=N \cap G_1$. The associated Lyndon-Hochschild-Serre spectral sequence of
$N_1\tail G_1 \two Q_1$, i.e.
\[
E^2_{p,q}=H_p(Q_1,H_q(N_1,\q))\Rightarrow H_{p+q}(G_1,\q),
\]
implies that 
\begin{align*}
\dim_\q H_1(G_1,\q) &\leq \dim_\q E^2_{0,1}+\dim_\q E^2_{1,0}\\
&= \dim_\q H_0(Q_1,H_1(N_1,\q)) +\dim_\q H_1(Q_1,\q).
\end{align*}
Since any subgroup of a polycyclic group is polycyclic, by 
\cite[Lemma~3.2]{kochloukova-mokari2014} we have $\dim_\q H_1(Q_1,\q)\leq h(Q)$, 
where $h(Q)$ is the Hirsch length of $Q$.
Since $[N:N_1]<\infty$, by Corollary \ref{nil-transfer} we have
$H_1(N_1,\q)\simeq H_1(N,\q)$.
So to prove the claim it is sufficient to prove that 
\[
\sup_{[Q:Q_1]<\infty}\dim_\q (N/N'\otimes_{Q_1} \q) < \infty.
\]
Let $A=N/N'$ and $H=G/N'$ and consider  the exact sequence $A \tail H \two Q$.
If we put $A_0=[A,H]$ and $Q_0=H/A_0$ and if we follow the proof of Theorem~A in 
\cite{kochloukova-mokari2014},
we obtain
\[
\sup_{[Q_0:Q_2]<\infty}\dim_\q (A_0\otimes_{Q_2} \q) <\infty.
\]
From the exact sequence $ A_0 \tail A \two A/A_0$, we obtain the exact
sequence
\[
A_0\otimes_{Q_2}\q \arr A \otimes_{Q_2}\q \arr (A/A_0)\otimes_{Q_2}\q \arr 0,
\]
which implies that
\[
\dim_\q  (A \otimes_{Q_2} \q) \leq \dim_\q (A_0\otimes_{Q_2}\q)+ 
\dim_\q \Big((A/A_0)\otimes_{Q_2}\q\Big).
\]
Now consider the exact sequence $A/A_0 \tail Q_0 \overset{\beta}{\two} Q$ and 
let $Q_1=\beta(Q_2)$. Since the action of $A/A_0$ over $A$ is trivial, we have 
$A\otimes_{Q_1} \q \simeq A \otimes_{Q_2}\q$. Since $A/A_0$ is a finitely generated 
abelian group, 
\[
\sup_{[Q_0:Q_2]<\infty}\dim_\q \Big((A/A_0)\otimes_{Q_2} \q\Big) <\infty.
\]
Therefore from the above relations we have 
\[
\sup_{[Q:Q_1]<\infty}\dim_\q (A\otimes_{Q_1} \q) <\infty.
\]
This completes the proof of the theorem.
\end{proof}

\begin{cor}
Let $N \tail G \two Q$ be an exact sequence of groups, where $N$ is nilpotent and 
$Q$ is nilpotent of class $c \leq 2$. If $G/N'$ is of type $\FP_3$,
then $\vb_1(G)$ is finite.
\end{cor}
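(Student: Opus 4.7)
The strategy is to verify the hypotheses of Proposition~\ref{betti1} and invoke it directly. Set $A := N/N'$ and $H := G/N'$, so that $A \tail H \two Q$ is an abelian-by-(nilpotent of class $\leq 2$) extension with $H$ of type $\FP_3$. Since $H$ is of type $\FP_3$ it is finitely generated, so its quotient $Q$ is finitely generated; being nilpotent, $Q$ is polycyclic, so the polycyclic hypothesis of Proposition~\ref{betti1} is automatic. Inspection of the proof of Proposition~\ref{betti1} shows that it only manipulates rational coefficients (e.g. terms of the form $A_0 \otimes_{Q_2} \q$), so it suffices to verify that $\bigotimes_\q^2 (A \otimes_\Z \q)$ is finitely generated as a $\q Q$-module via the diagonal action.

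If $c=1$, $Q$ is abelian and the extension $A \tail H \two Q$ is abelian-by-abelian with middle term of type $\FP_3$. Theorem~\ref{b-g2} then gives that $A \otimes_\Z \q$ is $3$-tame as a $\q Q$-module, hence $2$-tame, and Theorem~\ref{b-g} converts $2$-tameness into finite generation of $\bigotimes_\q^2(A \otimes_\Z \q)$ as a $\q Q$-module, as required.

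If $c=2$, let $\tilde{Q'} \se H$ denote the preimage of the central commutator subgroup $Q' \se Z(Q)$ and set $K := [\tilde{Q'}, \tilde{Q'}]$, a normal subgroup of $H$. Then in the quotient $H/K$ the image of $\tilde{Q'}$ is an abelian normal subgroup (its derived subgroup has been killed) with abelian quotient $Q/Q'$, so $H/K$ is metabelian. The plan is to apply Bieri--Strebel (Theorem~\ref{b-g2}) to the abelian normal subgroup $\tilde{Q'}/K$ of $H/K$, and to pull back the resulting tameness along the natural map $A \to A/(A \cap K) \harr \tilde{Q'}/K$ to obtain finite generation of $\bigotimes_\q^2(A \otimes_\Z \q)$ as a $\q Q$-module via the diagonal action. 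With this in hand Proposition~\ref{betti1} applies and yields $\vb_1(G) < \infty$.

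The main obstacle is the $c=2$ case: the quotient $H/K$ is not automatically of type $\FP_3$, so a direct appeal to Theorem~\ref{b-g2} is not justified. The fix requires packaging the $\FP_3$-data of $H$ together with the central structure $Q' \se Z(Q)$ more carefully---either by tracking the Lyndon--Hochschild--Serre spectral sequence of $A \tail H \two Q$ and using the finite generation of $H_2(H, \q)$ coming from $\FP_3$ of $H$, or by running the argument of Proposition~\ref{betti1} one step further, peeling off the central subgroup $Q'$ of $Q$ in the same spirit as the transition from $H$ to $Q_0 = H/[A, H]$ in that proof.
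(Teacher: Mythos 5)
Your $c=1$ case is fine, but you have correctly diagnosed that your argument does not close in the case $c=2$: the metabelian quotient $H/K$ you construct need not inherit type $\FP_3$ from $H$, so Theorem~\ref{b-g2} cannot be applied to it, and without that you never obtain the finite generation of $\bigotimes_\q^2(A\otimes_\Z\q)$ that your plan requires. This is a genuine gap, not a routine verification left to the reader. The paper closes it differently, and in a way worth noting: it does \emph{not} verify the hypothesis of Proposition~\ref{betti1} for $A=N/N'$ at all. Instead it quotes Lemma~3.5 from the proof of Corollary~B in \cite{kochloukova-mokari2014}, which says precisely that when $H=G/N'$ is of type $\FP_3$ and $Q$ is nilpotent of class $\leq 2$, the tensor square $\bigotimes_\q^2(A_0\otimes_\Z\q)$ with $A_0=[A,H]$ is finitely generated as a $\q Q$-module via the diagonal action; one then reruns the proof of Proposition~\ref{betti1} from the point where that finite generation is the only input.

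The distinction between $A$ and $A_0=[A,H]$ is the crux. The architecture of the proof of Proposition~\ref{betti1} splits $A$ along $A_0 \tail A \two A/A_0$: the piece $A/A_0$ is a finitely generated abelian group (being central in the polycyclic group $Q_0=H/A_0$) and contributes only a bounded term, so tameness-type control is needed only for $A_0$. Your plan aims at the stronger statement that $\bigotimes_\q^2(A\otimes_\Z\q)$ itself is finitely generated over $\q Q$, which is both more than is needed and not something the $\FP_3$ hypothesis on $H$ obviously delivers when $Q$ is non-abelian. Your closing suggestion of ``running the argument of Proposition~\ref{betti1} one step further, peeling off the central subgroup $Q'$'' is indeed the right instinct --- that is essentially what the cited Lemma~3.5 accomplishes --- but as written your proposal stops short of a proof.
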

\begin{proof}
By Lemma 3.5 in the proof of Corollary~B in \cite{kochloukova-mokari2014}, 
$\bigotimes_\q^2 (A_0\otimes_\Z \q)$ is finitely generated as $\q Q$-module via the diagonal 
action, where $A_0$ is as in the proof of Theorem \ref{betti1}. Now we can proceed as
in the proof of Theorem \ref{betti1}.
\end{proof}

\section{Some examples}

\subsection{S-arithmetic groups}\label{koch}
Unfortunately there is no classification of the nilpotent-by-abelian groups of type 
$\FP_n$ even in the case of $n =2$, though the metabelian case was solved in \cite{bieri-strebel1980}.  
In this case type $\FP_2$ turns out to be equivalent to finite presentability. Still in the case of 
soluble $S$-arithmetic groups there is a complete classification of finite 
presentability \cite[Theorem~7.5.2, Remark~4, Chap. VII]{Abels}. They are finitely presented if 
and only if are of type $\FP_2$. Note that soluble $S$-arithmetic groups are 
nilpotent-by-abelian-by-finite.  

By a theorem of Borel-Serre \cite[Theorem~0.4.4]{Abels}, any $S$-arithmetic subgroup 
of a reductive group is of type $\FP_\infty$ and thus for such soluble subgroups the result of 
Corollary \ref{d-f5} is true for any $j\geq 0$. But such a result can be proved for other
type of $S$-arithmetic groups. 

The following example was considered in \cite{Abels-Brown}: Let $p$ be a prime and
\[
\Gamma_n \leq \GL_{n+1}(\mathbb{Z}[1/p]),
\]
where $\Gamma_n$ is the group of upper triangular matrices $A$ with 
$A_{1,1} = 1 = A_{n+1, n+1}$.

\begin{thm} 
The group $\Gamma_n$ is of type $\FP_{n-1}$, but not of type $\FP_n$.
\end{thm}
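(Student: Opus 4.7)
The plan is to follow the original argument of Abels and Brown, applying Brown's filtered-complex finiteness criterion to a natural action of $\Gamma_n$ on a contractible CW-complex. The natural choice is the Bruhat-Tits building $X$ of $\SL_{n+1}(\mathbb{Q}_p)$, an $n$-dimensional contractible simplicial complex on which $\GL_{n+1}(\mathbb{Q}_p)$, and therefore $\Gamma_n$ via the inclusion $\Z[1/p] \harr \mathbb{Q}_p$, acts. First I would verify that the cell stabilizers for this action are of type $\FP_\infty$; they are intersections of $\Gamma_n$ with compact open parabolics of $\GL_{n+1}(\mathbb{Q}_p)$, and hence are virtually nilpotent arithmetic groups amenable to standard finiteness arguments.

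The second step is to introduce a $\Gamma_n$-invariant Morse-type height function $h\colon X\to\R$, built from the $p$-adic valuation naturally associated to the center of $\Gamma_n$ (the $\Z[1/p]$-subgroup sitting in the top-right corner), and to filter $X$ by sublevel sets. Brown's criterion then reduces the $\FP_m$ question to the question of $(m{-}1)$-connectivity of the descending links of vertices in this filtration, modulo the action. The heart of the argument is the identification of these descending links with spherical buildings attached to proper parabolic subgroups of $\GL_{n+1}$; by the Solomon-Tits theorem such links have the homotopy type of wedges of $(n{-}1)$-spheres.

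From this structural input both halves of the theorem follow. The positive assertion, that $\Gamma_n$ is of type $\FP_{n-1}$, follows since the descending links are $(n{-}2)$-connected, so Brown's criterion applies up through dimension $n-1$. The negative assertion, that $\Gamma_n$ fails to be of type $\FP_n$, is obtained by producing an explicit essential $(n{-}1)$-cycle in a descending link, coming from an apartment in the spherical building; this cycle represents a nonzero class in $\widetilde{H}_{n-1}$ of the link and, via Brown's criterion, obstructs finite generation in dimension $n$. The main obstacle is the explicit matrix-level construction of the obstructing $(n{-}1)$-cycle and the verification that it is not killed in the Brown filtration, which is the technical core of the Abels-Brown paper; we refer there for the complete calculation.
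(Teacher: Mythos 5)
You should note first that the paper does not actually prove this statement: its entire ``proof'' is the citation ``See Theorem A in [Abels--Brown]''. Your proposal ends the same way, deferring ``the explicit matrix-level construction of the obstructing $(n-1)$-cycle and the verification that it is not killed in the Brown filtration'' -- that is, the entire negative direction, which is the whole point of the theorem -- to the Abels--Brown paper. So, measured as a proof, your text establishes nothing beyond what the bare citation already does; both halves of the statement still rest entirely on the reference. That is a genuine gap, not an alternative argument.

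Moreover, the outline you give is not an accurate description of how the result is, or could straightforwardly be, proved. The picture ``filter the Bruhat--Tits building of $\GL_{n+1}(\mathbb{Q}_p)$ by a height function, identify descending links with spherical buildings of parabolics, apply Solomon--Tits'' is the standard mechanism in the \emph{reductive} setting (e.g. for $S$-arithmetic subgroups of semisimple groups over function fields). For the solvable group $\Gamma_n$ the action on the building is highly non-cocompact transverse to any such height function, the relevant invariant subcomplex and Morse function have to be chosen with care, and the descending links one actually ends up analysing are not spherical buildings but joins of smaller pieces whose connectivity (and whose nonvanishing top homology) is precisely the delicate technical content; Solomon--Tits does not apply off the shelf, and if the links really were Tits buildings the finiteness length would not come out to $n-1$. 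The argument in Abels--Brown itself is also organised differently from your sketch: the positive half leans on Abels' compact-presentability machinery, and the negative half applies Brown's filtration criterion together with an explicit homological computation for the unipotent radical $N_n$ showing that the relevant homology is not finitely generated over the diagonal part -- which is exactly the kind of ``tameness'' failure this paper's framework is built around. To turn your proposal into a proof you would need to either carry out that computation or correctly set up and verify the Morse-theoretic version, neither of which is done here.
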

\begin{proof}
See Theorem A in \cite{Abels-Brown}.
\end{proof}

Let $N_n$ be the subgroup of $\Gamma_n$ containing all elements of $\Gamma_n$, where 
the main diagonal contains only entries $1$. Then $N_n$ is nilpotent and 
\[
Q_n = \Gamma_n / N_n \simeq \mathbb{Z}^{n-1}.
\]
In this case the abelianization 
$V_n = N_n / [N_n, N_n]$  is isomorphic to $\mathbb{Z}[{1/p }]^{n}$, so 
$V_n \otimes_{\mathbb{Z}} \mathbb{Q} \simeq \mathbb{Q}^{n}$ is finite dimensional 
over $\q$. Hence all tensor and exterior powers of $V_n$ are finitely generated over 
$\q Q_n$. Thus Theorem \ref{b-g} implies that $V_n\otimes_\Z\q$ is $m$-tame for any 
$m\geq 2$. Now by Theorem~\ref{d-f4} we obtain the following result.

\begin{prp} 
For any $j \geq 0$, $\vb_j(\Gamma_n)$ is finite.
\end{prp}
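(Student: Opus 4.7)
The plan is to apply Theorem~\ref{d-f4} directly to the extension $N_n \tail \Gamma_n \two Q_n$. For each fixed $j \geq 1$, I would verify the four hypotheses of that theorem: $\Gamma_n$ is finitely generated (as an $S$-arithmetic group, or by writing down explicit generating matrices), $N_n$ is nilpotent of some class $c$ (since it consists of upper unitriangular matrices, $c \leq n-1$), $Q_n \simeq \Z^{n-1}$ is abelian, and $N_n/N_n' = V_n$ satisfies the required tameness. The case $j=0$ is trivial since $\vb_0(\Gamma_n) = 1$.

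The tameness verification is the heart of the argument, but essentially all the ingredients have been assembled in the discussion preceding the proposition. Namely, $V_n \simeq \Z[1/p]^n$, so $V_n \otimes_\Z \q \simeq \q^n$ is finite-dimensional over $\q$. Consequently every tensor power $\bigotimes_\q^m (V_n \otimes_\Z \q)$ is again finite-dimensional over $\q$, and is therefore automatically finitely generated as a $\q Q_n$-module, for every $m \geq 2$. Applying Theorem~\ref{b-g} (the equivalence (ii)$\Leftrightarrow$(i)) yields that $V_n \otimes_\Z \q$ is $m$-tame as a $\q Q_n$-module for every $m \geq 2$.

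Fixing $j \geq 1$ and setting $m = 2(c(j-1)+1) \geq 2$, the previous paragraph furnishes the hypothesis of Theorem~\ref{d-f4}, and that theorem concludes $\vb_j(\Gamma_n) < \infty$. Since $j$ was arbitrary, the proposition follows. There is no real obstacle in this proof: all the work is done in the preceding sections, and the proposition is effectively a bookkeeping corollary of Theorem~\ref{d-f4} specialized to groups whose nilpotent part has finite-dimensional rational abelianization, a condition under which tameness of all orders is automatic.
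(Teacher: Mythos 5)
Your proposal is correct and follows exactly the paper's argument: since $V_n\otimes_\Z\q\simeq\q^n$ is finite-dimensional, all its tensor powers are finitely generated over $\q Q_n$, so Theorem~\ref{b-g} gives $m$-tameness for every $m\geq 2$ and Theorem~\ref{d-f4} applies. This is precisely how the paper derives the proposition.
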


\subsection{Groups of finite torsion-free rank}
It is a well-known theorem of Mal'cev that polycyclic groups are 
nilpotent-by-abelian-by-finite \cite[3.1.14]{lennox-robinson2004}. 
On the other hand, for a polycyclic group $G$, the group ring $\mathbb{Z} G$
is (right) Noetherian \cite[4.2.3]{lennox-robinson2004} and thus $G$ is 
of type $\FP_\infty$. Now by Corollary \ref{d-f5}, all virtual rational Betti numbers
of $G$ are finite. A direct and much easier proof of this fact is given
in \cite[Lemma~3.2]{kochloukova-mokari2014}

A polycyclic group is a special case of constructible groups. 
A soluble group is called constructible if and only if it can be built from the trivial group 
in finitely many steps by taking descending HNN-extensions and finite extensions.
It is well-known that the class of constructible soluble groups is closed with respect 
to taking homomorphic images and subgroups of finite index 
\cite[Proposition~2, Theorem~4]{baumslag-bieri1976}. Moreover, they have finite Pr\"ufer rank
\cite[3.3, Remark~2]{baumslag-bieri1976} and thus are nilpotent-by-abelian-by-finite. The last 
part follows from the proof of \cite[Theorem~10.38]{robinson1972}.
Furthermore, constructible soluble groups are finitely presented and are of type $\FP_\infty$
\cite[Proposition~1]{baumslag-bieri1976}. Thus by Corollary~\ref{d-f5} all virtual rational Betti
numbers of these groups are finite.

Kochloukova and the second author gave a good bound for virtual rational Betti numbers of
a polycyclic group
\cite[Lemma~3.2]{kochloukova-mokari2014}. Their proof work even for the larger class of groups 
of finite torsion-free rank. Polycyclic and constructible groups are of finite Pr\"ufer rank 
and thus they are of finite torsion-free rank.

A group G, not necessarily soluble, is said to be of finite torsion-free rank if
it has a series of subgroups
\[
1=G_0 \lhd G_1 \lhd \dots \lhd G_n=G,
\]
such that each non-torsion factor $G_i/G_{i-1}$ is infinite cyclic. 
One can show that the number of infinite cyclic factors is independent 
of the chosen series  (see the proof of \cite[1.3.3]{lennox-robinson2004})
which it is called either the torsion-free
rank or the Hirsch number of $G$ and we denote it by $h(G)$. 

\begin{prp}
Let $G$ be a group of finite torsion-free rank. Then for any integer $j\geq 0$,
$\dim_\q H_j(G, \q)\leq \binom{h(G)}{j}$. In particular, 
\[
\vb_j(G)\leq \binom{h(G)}{j}.
\]
\end{prp}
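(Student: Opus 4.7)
The plan is to prove the bound $\dim_\q H_j(G,\q)\le\binom{h(G)}{j}$ by induction on the length $n$ of a subnormal series $1=G_0\lhd G_1\lhd\cdots\lhd G_n=G$ whose factors are each infinite cyclic or torsion. The ``in particular'' clause will follow from monotonicity of the Hirsch length: intersecting the given series with an arbitrary subgroup $M\le G$ produces a series for $M$ in which each non-torsion factor $(M\cap G_i)/(M\cap G_{i-1})$ embeds into $G_i/G_{i-1}\cong\Z$ and is therefore either trivial or infinite cyclic, so $h(M)\le h(G)$ and $\dim_\q H_j(M,\q)\le\binom{h(M)}{j}\le\binom{h(G)}{j}$.

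The base case $n=0$ is trivial. For the inductive step set $N:=G_{n-1}$ and feed the extension $N\tail G\two G/N$ into the Lyndon--Hochschild--Serre spectral sequence
\[
E^2_{p,q}=H_p(G/N,H_q(N,\q))\Longrightarrow H_{p+q}(G,\q),
\]
for which the inductive hypothesis gives $\dim_\q H_q(N,\q)\le\binom{h(N)}{q}<\infty$. If $G/N\cong\Z$ then $h(G)=h(N)+1$; since $H_p(\Z,-)=0$ for $p\ge 2$, no $d_r$ with $r\ge 2$ touches the columns $p\le 1$, so $E^\infty_{0,j}=E^2_{0,j}$ and $E^\infty_{1,j-1}=E^2_{1,j-1}$. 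For any $\q\Z$-module $M$ the (co)invariants $H_0(\Z,M)=M_\Z$ and $H_1(\Z,M)=M^\Z$ each have $\q$-dimension at most $\dim_\q M$, so Pascal's rule gives
\[
\dim_\q H_j(G,\q)\le\dim_\q H_j(N,\q)+\dim_\q H_{j-1}(N,\q)\le\binom{h(N)}{j}+\binom{h(N)}{j-1}=\binom{h(G)}{j}.
\]

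If instead $G/N$ is torsion, then $h(G)=h(N)$ and one wants the spectral sequence to collapse onto the column $p=0$. For a locally finite group $T$ and any $\q T$-module $M$, $H_p(T,M)=0$ for $p\ge 1$: this is Maschke's theorem for finite $T$, and passes to filtered unions $T=\varinjlim F_\alpha$ because group homology commutes with filtered colimits. Granted this vanishing, $E^2_{p,q}=0$ for $p\ge 1$ and
\[
\dim_\q H_j(G,\q)\le\dim_\q H_j(N,\q)_{G/N}\le\dim_\q H_j(N,\q)\le\binom{h(G)}{j}.
\]

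The only sensitive point is the torsion case when $G/N$ is not known to be locally finite; the argument above requires the torsion factors of the chosen series to be locally finite in order to invoke the vanishing of higher $\q$-homology. In all concrete classes to which the proposition is applied here (polycyclic, constructible, and more generally nilpotent-by-abelian-by-finite of finite torsion-free rank) the series can be chosen with finite torsion factors, so the inductive argument goes through verbatim.
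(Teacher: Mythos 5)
Your argument is the one the paper intends: its ``proof'' is only a pointer to the polycyclic case of \cite[Lemma~3.2]{kochloukova-mokari2014}, which is exactly this induction on a subnormal series via the Lyndon--Hochschild--Serre spectral sequence, with Pascal's rule absorbing an infinite cyclic top factor and a Maschke-type vanishing absorbing a finite (there) or locally finite (here) torsion factor. Both of your cases are carried out correctly: for $G/N\simeq \Z$ the two-column degeneration and the bounds $\dim_\q M_\Z,\dim_\q M^\Z\leq \dim_\q M$ give $\binom{h(N)}{j}+\binom{h(N)}{j-1}=\binom{h(G)}{j}$, and your reduction of the ``in particular'' clause to monotonicity of $h$ under intersecting the series with a subgroup is also fine.

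The ``sensitive point'' you flag is genuine, and it is worth being explicit that it is a defect of the statement as given rather than of your write-up. With the paper's definition, which allows arbitrary torsion factors, the proposition is false: a free Burnside group $B(m,n)$ with $m\geq 2$ and $n$ a sufficiently large odd exponent is a torsion group, hence has $h=0$ and ``finite torsion-free rank'', yet its Schur multiplier is known to be free abelian of infinite rank, so $\dim_\q H_2(B(m,n),\q)=\infty>\binom{0}{2}$. The vanishing $H_p(T,\q)=0$ for $p\geq 1$ that your torsion step needs really does require $T$ to be locally finite (it holds for finite $T$ by averaging and passes to directed unions, but not to general torsion groups). Local finiteness of torsion subquotients is automatic for the soluble-by-finite classes to which the paper applies the result (polycyclic, constructible, nilpotent-by-abelian-by-finite), so your proof establishes the proposition in exactly the generality in which it is true and used; to salvage the stated generality one must add the hypothesis that the torsion factors of the series are locally finite.
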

\begin{proof}
The proof is similar to the proof of the case of polycyclic groups given
in \cite[Lemma~3.2]{kochloukova-mokari2014}.
\end{proof}


\bigskip
\address{{\footnotesize

Behrooz Mirzaii,

Institute of Mathematics and Computer Sciences (ICMC),

University of Sao Paulo (USP), Sao Carlos, Brazil.

e-mail:\ bmirzaii@icmc.usp.br,

}}

\bigskip
\address{{\footnotesize

Fatemeh Yeganeh Mokari,

Institute of Mathematics, Statistics and Scientific Computing (IMECC),

State University of Campinas (Unicamp), Campinas, Brazil.

email:\ f.mokari61@gmail.com

}}
\end{document}